\newtheorem{theorem}{Theorem}
\newtheorem{proposition}{Proposition}
\newtheorem{lemma}{Lemma}
\newtheorem{remark}{Remark}
\newcommand{\R}{\mathbb{R}}
\newcommand{\Om}{\Omega}
\numberwithin{equation}{section}
\numberwithin{theorem}{section}
\numberwithin{proposition}{section}
\numberwithin{definition}{section}
\numberwithin{corollary}{section}
\numberwithin{lemma}{section}
\numberwithin{example}{section}
\begin{document}
\title{}
\begin{center}
\Large\textbf{On Some  Model Problem for the Propagation of Interacting Species in a Special Environment}\par \bigskip
\normalsize
\vskip .3 cm
\large Michel Chipot$^{(a)}$ \& Mingmin Zhang$^{(b)}$\par \bigskip
\small $^{(a)}$Institut f\"ur Mathematik \\Universit\"at Z\"urich \\Winterthurerstrasse 190 \\8057 Z\"urich, Switzerland\\\texttt{m.m.chipot@math.uzh.ch} \par \bigskip
	$^{(b)}$School of Mathematical Sciences\\
	 University of Science and Technology of China\\ 
	 Hefei, Anhui 230026, China\\
	 and\\
	 	Aix Marseille Universit\'e\\ CNRS, Centrale Marseille, I2M\\ UMR 7373, 13453
	 Marseille, France\\
 \texttt{lang925@mail.ustc.edu.cn}
 \par %\bigskip
 \end{center}
\vskip .3 cm
%\date{ }
%\maketitle
\begin{abstract}The purpose of this note is  to  study  the existence of a nontrivial solution for an elliptic system which comes from a newly introduced mathematical problem so called Field-Road model. Specifically, it consists of coupled equations set in domains of different dimensions together with some interaction of  non classical type. We consider a truncated problem  by imposing  Dirichlet boundary conditions and  an unbounded setting as well.
 \end{abstract}
\vskip .3 cm
\noindent
{\bf AMS 2020 Subject Classification}: 35J47, 35J57

\noindent{\bf Key words}: Nonlinear elliptic systems; Coupled equations; Field-Road model; Truncated domain.

\vskip .3 cm

%%%%%%%%%%%%%%%%%%%%%%%%%%%%%%%%%%%%%%%%%%%%%

\section{Introduction and notation} 
It has been observed in diffusion or propagation problems that the topography of the environment is playing a crucial r\^ole. We refer for instance to \cite{B1}, \cite{B} and the references there. In particular the roads are not only essential for human beings but also can be useful for other species. One can easily imagine that mosquitos could be interested in socialising with human beings along a road but many other situations seem to have occurred due to this element of civilisation. In this note we consider a model problem where the living space of our species consists in a field and one or several roads that we will assume to be unidimensional.  
We will also assume the roads to be straight but several extensions can be addressed very easily with our approach. On each domain -field or road- we will consider nonlinear diffusion equations which include for instance the Fisher-KPP types. We restrict ourselves to two species. The interaction with the two species is  modelled by a particular flux condition (see \cite{B1}, \cite{B}  and the set of equation below). 
\vskip .3 cm
Let $\Om_\ell$ be the open set of $\R^2$, defined for $\ell, L >0$ as 
\begin{equation*}
\Om_\ell = (-\ell, \ell) \times (0,L).
\end{equation*}
We denote by $\Gamma_0$ the part of the boundary of $\Om_\ell$  located on the $x_1$-axis i.e.
\begin{equation*}
\Gamma_0 = (-\ell, \ell) \times \{0\}
\end{equation*}
and by $\Gamma_1$ the rest of the boundary that is to say 
\begin{equation*}
\Gamma_1 = \partial \Om_\ell \backslash \Gamma_0.
\end{equation*}
When convenient we will identify $\Gamma_0$ to $(-\ell, \ell)$. In this setting $\Om_\ell$ stands for a field and $\Gamma_0$ for a portion of a road.
\begin{figure}[H]
	\centering
	\includegraphics[scale=1]{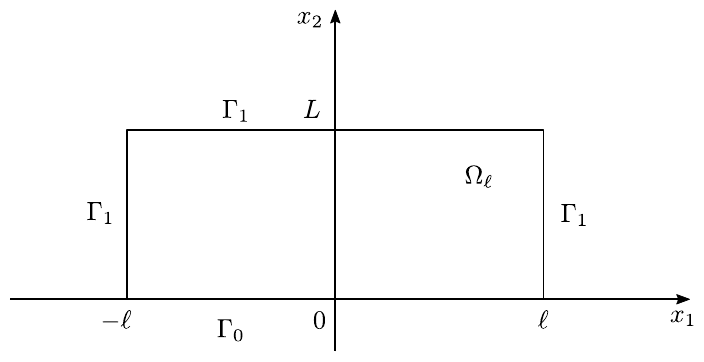}
	\caption{The domain $\Omega_\ell$ for one-road problem}
%	\label{one-road}	
\end{figure}
\noindent Set 
\begin{equation*}
V = \{v\in H^1( \Om_\ell) ~|~ v=0 \text{ on } \Gamma_1 \}.
\end{equation*}
We would like to find a solution to the problem
\begin{equation*}
\begin{cases}
-D\Delta v = f(v) \text{ in }  \Om_\ell, \cr
v=0 \text{ on } \Gamma_1, ~~D \frac{\partial v}{\partial n} = \mu u-\nu v \text{ on } \Gamma_0, \cr
-D'u'' + \mu u = g(u) + \nu v \text{ in } \Gamma_0, \cr
u=0 \text{ on } \partial \Gamma_0 =\{-\ell,\ell\}.\cr
\end{cases} 
\end{equation*}
($n$ denotes the outward unit normal to $ \Om_\ell$). 
\vskip .3 cm
\noindent In the weak form we would like to find a couple $(u,v)$ such that

\begin{equation}\label{1.1}
\begin{cases}
(u,v) \in H^1_0(\Gamma_0)\times V, \cr

\int_{\Om_\ell} D\nabla v\cdot \nabla \varphi ~dx +  \int_{\Gamma_0} \nu v(x_1,0)\varphi  ~ dx_1 =  \int_{\Om_\ell} f(v)\varphi ~dx + \int_{\Gamma_0} \mu u \varphi ~dx_1 ~~\forall \varphi \in V, \cr

\int_{\Gamma_0} D'u'\psi'  +\mu u \psi~dx_1 = \int_{\Gamma_0} \nu v(x_1,0)\psi ~dx_1 +  \int_{\Gamma_0} g(u) \psi ~dx_1 ~~\forall \psi  \in H^1_0(\Gamma_0). \cr
\end{cases} 
\end{equation}

Here we assume that 
\begin{equation*} D, D',m, \mu,\nu ~~\text{are positive constants,}
\end{equation*}
such that
\begin{equation}\label{1.2}
m\geq \frac{\nu}{\mu}.
\end{equation}
%where $\vee$ denotes the maximum of two numbers.

$f, g$ are Lipschitz continuous functions i.e. such that for some positive constants $L_f, L_g$ it holds 
\begin{equation} \label{1.3}
|f(x)- f(y) | \leq L_f |x-y| , ~~|g(x)- g(y) | \leq L_g |x-y|~~\forall x,y \in \R.
\end{equation}
Note that this implies that for $\lambda \geq L_f$ (respectively  $\eta \geq L_g$) the functions 
\begin{equation} \label{1.4}
x \to \lambda x - f(x),~ \eta x - g(x)
\end{equation}
are nondecreasing. In addition we will assume 
\begin{equation} \label{1.5}
f(0)=f(1) =0, ~~f > 0 \text{ on } (0,1), ~~f \leq 0 \text{ on } (1, +\infty).
\end{equation}
\begin{equation} \label{1.6}
g(0)=0, ~~g(m) \leq 0.
\end{equation}
One should remark that under the assumptions above, $(0,0)$ is a solution to \eqref{1.1}. We are interested in finding a nontrivial solution to the problem \eqref{1.1}. For the notation and the usual properties on Sobolev spaces we refer to \cite{Ci}, \cite{C}, \cite{DL}, \cite{E}, \cite{GT}. We will also consider some extension in the case of a  two-road problem which consists of three coupled equations with two interaction conditions on the upper- and lower- boundaries.  Finally, we will address the case of an unbounded setting for the one-road problem.

\section{Preliminary results}

\begin{lemma}\label{lemma-2.1} Suppose that $w $ is a measurable function on ${\Gamma_0}$ such that 
\begin{equation*}\label{2.1}
0\leq w \leq m.
\end{equation*}
Then under the assumptions above  the problem 
\begin{equation}\label{2.2}
\begin{cases}
v\in V, \cr
\int_{\Om_\ell} D\nabla v\cdot \nabla \varphi ~dx~ +&\int_{\Gamma_0} \nu  v(x_1,0)\varphi (x_1,0) ~ dx_1 \cr &=  \int_{\Om_\ell} f(v)\varphi ~dx + \int_{\Gamma_0} \mu w \varphi ~dx_1 ~~\forall \varphi \in V, \cr
\end{cases} 
\end{equation}
possesses a minimal and a maximal solution with values in $(0, \frac{\mu}{\nu}m)$.
\end{lemma}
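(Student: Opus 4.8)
The plan is to solve \eqref{2.2} by the sub- and super-solution method combined with a monotone iteration; the constraints $0\le w\le m$ together with \eqref{1.2} and \eqref{1.5} are exactly what is needed to produce an ordered pair consisting of a sub- and a super-solution. First I would fix $\lambda\ge L_f$; then, as in \eqref{1.4}, the map $s\mapsto \lambda s+f(s)$ is nondecreasing on $\R$. For $\phi\in L^2(\Om_\ell)$ consider the continuous, coercive bilinear form on $V$
\[
a(v,\varphi)=\int_{\Om_\ell}D\nabla v\cdot\nabla\varphi\,dx+\lambda\int_{\Om_\ell}v\varphi\,dx+\int_{\Gamma_0}\nu\,v(x_1,0)\varphi(x_1,0)\,dx_1,
\]
coercivity coming from the Poincar\'e inequality on $V$ (elements vanish on $\Gamma_1$) and continuity of the boundary term from the trace theorem. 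By Lax--Milgram there is a unique $v=T\phi\in V$ with
\[
a(v,\varphi)=\int_{\Om_\ell}\big(\lambda\phi+f(\phi)\big)\varphi\,dx+\int_{\Gamma_0}\mu w\varphi\,dx_1\qquad\forall\varphi\in V,
\]
and a solution of \eqref{2.2} is exactly a fixed point of $T$.

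The decisive step is a weak comparison principle: $T$ is order preserving. If $\phi_1\le\phi_2$, then $z=T\phi_2-T\phi_1\in V$ satisfies $a(z,\varphi)=\int_{\Om_\ell}\big[(\lambda\phi_2+f(\phi_2))-(\lambda\phi_1+f(\phi_1))\big]\varphi\,dx\ge0$ for every $\varphi\ge0$ in $V$, by monotonicity of $s\mapsto\lambda s+f(s)$; testing with $\varphi=z^-\in V$ and using $a(z,z^-)=-\|z^-\|_a^2$ forces $z^-=0$, i.e.\ $T\phi_1\le T\phi_2$. Next I would check that $\lbar v_0\equiv0$ is a subsolution (using $f(0)=0$ and $w\ge0$) and that the constant $\ubar v_0\equiv\frac{\mu}{\nu}m$ is a supersolution: its gradient vanishes, $\frac{\mu}{\nu}m\ge1$ by \eqref{1.2}, hence $f(\tfrac{\mu}{\nu}m)\le0$ by \eqref{1.5}, and $w\le m$. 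A variant of the same testing argument --- now against the negative parts of $\ubar v_0-T\ubar v_0$ and of $T\lbar v_0-\lbar v_0$, which belong to $V$ even though $\ubar v_0,\lbar v_0$ do not --- gives $T\lbar v_0\ge\lbar v_0$ and $T\ubar v_0\le\ubar v_0$.

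Setting $\lbar v_{k+1}=T\lbar v_k$ and $\ubar v_{k+1}=T\ubar v_k$, monotonicity of $T$ together with the last two inequalities yields
\[
0=\lbar v_0\le\lbar v_1\le\cdots\le\lbar v_k\le\ubar v_k\le\cdots\le\ubar v_1\le\ubar v_0=\tfrac{\mu}{\nu}m,
\]
so both sequences converge a.e.\ to limits $\lbar v\le\ubar v$ with values in $[0,\frac{\mu}{\nu}m]$. Since $0\le\lbar v_k,\ubar v_k\le\frac{\mu}{\nu}m$, the right-hand data are bounded in $L^\infty$, so coercivity gives a uniform $H^1$ bound; the (monotone, hence full) sequences therefore converge weakly in $V$ and, by the compact embedding $H^1(\Om_\ell)\hookrightarrow L^2(\Om_\ell)$ and the compact trace $H^1(\Om_\ell)\to L^2(\Gamma_0)$, strongly in $L^2(\Om_\ell)$ and $L^2(\Gamma_0)$. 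The Lipschitz bound \eqref{1.3} gives $f(\lbar v_k)\to f(\lbar v)$ in $L^2(\Om_\ell)$, and passing to the limit in the weak formulation shows that $\lbar v$ and $\ubar v$ solve \eqref{2.2}. Finally, any solution $v$ of \eqref{2.2} with values in $[0,\frac{\mu}{\nu}m]$ is a fixed point of $T$ lying between $\lbar v_0$ and $\ubar v_0$; applying $T$ repeatedly and using monotonicity gives $\lbar v\le v\le\ubar v$, so $\lbar v$ is the minimal and $\ubar v$ the maximal such solution. That the values in fact lie in $(0,\frac{\mu}{\nu}m)$ follows from the strong maximum principle applied to $\lbar v\ge0$ and to $\frac{\mu}{\nu}m-\ubar v\ge0$, both of which are $\ge0$ and satisfy $-D\Delta(\cdot)+\lambda(\cdot)\ge0$ in $\Om_\ell$, thanks once more to the monotonicity of $s\mapsto\lambda s+f(s)$ and to $f(\tfrac{\mu}{\nu}m)\le0$.

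The iteration is routine; the point that needs care is the nonstandard boundary structure. The weak comparison principle has to be carried out for the operator that carries the extra boundary integral $\int_{\Gamma_0}\nu v\varphi$, and the natural sub- and super-solutions are constants, which are \emph{not} in $V$, so throughout one must only ever test against truncations ($z^{\pm}$ and the like) that vanish on $\Gamma_1$. Keeping straight which functions are admissible test functions is the only real subtlety; modulo that, everything reduces to the standard monotone scheme.
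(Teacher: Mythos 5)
Your proposal is correct and follows essentially the same route as the paper: an order-preserving map defined via Lax--Milgram with the $\lambda$-shift ($\lambda\ge L_f$), the constants $0$ and $\frac{\mu}{\nu}m$ as an ordered sub/supersolution pair (using \eqref{1.2}, \eqref{1.5}, \eqref{1.6} and $0\le w\le m$), and the monotone iteration passed to the limit by compactness. The only differences are cosmetic --- you add a strong-maximum-principle remark and are slightly more explicit about the limit passage, while the paper also records $L^2$-continuity of the iteration map --- so nothing further is needed.
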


\begin{proof} Let us remark first that any nonnegative solution to \eqref{2.2} takes its values in $(0, \frac{\mu}{\nu}m)$. Indeed if $v$ is solution to \eqref{2.2} taking as test function $\varphi = (v-k)^+$, $k= \frac{\mu}{\nu}m\geq 1$ one gets 
\begin{equation*}
\begin{aligned}
\int_{\Om_\ell} D|\nabla (v-k)^+|^2 ~dx &= \int_{\Om_\ell} D\nabla (v-k)\cdot \nabla (v-k)^+~dx = \int_{\Om_\ell} D\nabla v\cdot \nabla (v-k)^+~dx \cr
&=   \int_{\Om_\ell} f(v)(v-k)^+~dx + \int_{\Gamma_0} \{\mu w - \nu v(x_1,0)\}(v-k)^+ ~ dx_1 \cr
 &  \leq  \int_{\Gamma_0} \{\mu w - \nu v \}(v-k)^+ ~ dx_1 \leq 0, \cr
\end{aligned}
\end{equation*}
since on the set where $v \geq k$ one has $v\geq \frac{\mu}{\nu} m$ and $\{\mu w - \nu v\}\leq \{\mu w - \mu m\} \leq0$.
\vskip .3 cm
Next let us note that $0$ is a subsolution to \eqref{2.2}. Indeed this follows trivially from 
\begin{equation*}\label{2.3}
\int_{\Om_\ell} D\nabla 0\cdot \nabla \varphi ~dx +  \int_{\Gamma_0} \nu 0\varphi ~ dx_1 \leq  \int_{\Om_\ell} f(0)\varphi ~dx + \int_{\Gamma_0} \mu w \varphi ~dx_1 ~~\forall \varphi \in V,  \varphi  \geq 0.
\end{equation*}
On the other hand, for $\varphi \in V$, $\varphi \geq 0$,  one has also for $k= \frac{\mu}{\nu}m$, since $k\geq 1$
\begin{equation*}\label{2.4}
\int_{\Om_\ell} D \nabla k\cdot \nabla \varphi ~dx + \int_{\Gamma_0} \nu k \varphi ~dx_1 \geq \int_{\Om_\ell} f( k) \varphi ~dx + \int_{\Gamma_0}\mu w \varphi ~dx_1
\end{equation*}
and the function constant equal to $k= \frac{\mu}{\nu}m$ is a supersolution to \eqref{2.2}.
\vskip .3 cm
For $z\in L^2(\Om_\ell)$ we denote by $y=S(z)$ the solution to 

\begin{equation}\label{2.5}
\begin{cases}
y\in V, \cr
\int_{\Om_\ell} D\nabla y\cdot \nabla \varphi ~dx & + \int_{\Om_\ell} \lambda y \varphi ~dx + \int_{\Gamma_0} \nu y(x_1,0)\varphi  ~ dx_1 \cr &=  \int_{\Om_\ell} f(z)\varphi ~dx + \int_{\Om_\ell} \lambda z \varphi ~dx  +\int_{\Gamma_0} \mu w \varphi ~dx_1 ~~\forall \varphi \in V, \cr
\end{cases} 
\end{equation}
where we have chosen $\lambda \geq L_f$. Note that the existence of a unique solution $y$ to the problem above is an immediate consequence of the Lax-Milgram theorem. 
\vskip .3 cm
First we claim that the mapping $S$ is continuous from $L^2(\Om_\ell)$ into itself. Indeed setting $y'=S(z')$ one has by subtraction of the equations satisfied by $y$ and $y'$ 
\begin{equation}\label{2.6}
\begin{aligned}
\int_{\Om_\ell} D\nabla (y-y')&\cdot \nabla \varphi ~dx + \int_{\Om_\ell} \lambda (y-y') \varphi ~dx + \int_{\Gamma_0} \nu (y-y')\varphi  ~ dx_1  \cr 
&=  \int_{\Om_\ell} \{f(z)-f(z')\}\varphi ~dx + \int_{\Om_\ell} \lambda (z-z') \varphi ~dx  ~~\forall \varphi \in V. 
\end{aligned}
\end{equation}

Taking $\varphi = y-y'$ one derives easily 
\begin{equation*}
\begin{aligned}
 \lambda   \int_{\Om_\ell}|y-y'|^2~dx &\leq   \int_{\Om_\ell} |f(z)-f(z')||y-y'| ~dx +  \lambda  \int_{\Om_\ell}|z-z'||y-y'| ~dx \cr
& \leq ( L_f +  \lambda ) \int_{\Om_\ell}|z-z'||y-y'| ~dx. \cr
\end{aligned}
\end{equation*}
By the Cauchy-Schwarz inequality we obtain then
\begin{equation*}
|S(z)-S(z')|_{2,{\Om_\ell}} \leq \frac{L_f+\lambda}{\lambda}|z-z'|_{2,{\Om_\ell}}
\end{equation*}
which shows the continuity of the mapping $S$ ($|~~|_{2,{\Om_\ell}}$ denotes the usual $L^2({\Om_\ell})$-norm).
\vskip .3 cm
We show now that the mapping $S$ is monotone. Indeed suppose that $z\geq z'$ and as above denote by $y'$ the function $S(z')$. Taking $\varphi = -(y-y')^-$ in \eqref{2.6} we get 
\begin{equation*}
\begin{aligned}
\int_{\Om_\ell} D|\nabla (y-y')^-|^2~dx &+ \int_{\Om_\ell} \lambda ((y-y')^-)^2~dx + \int_{\Gamma_0} \nu((y-y')^-)^2  ~ dx_1  \cr 
&=  - \int_{\Om_\ell} [ \lambda (z-z') +  \{f(z)-f(z')\}] (y-y')^- ~dx \leq 0, 
\end{aligned}
\end{equation*}
since $- \{f(z)-f(z') \}\leq L_f |z-z'| \leq \lambda (z-z')$ (see \eqref{1.3}, \eqref{1.4}). This shows that $(y-y')^- = 0$ and the monotonicity of the mapping $S$. 
\vskip .3 cm
We consider now the following sequences  (Cf. \cite{A}) :
\begin{equation}\label{2.7}
\begin{aligned}
&{\underline{y_0}}= 0,~~~~ {\overline{y_0}}= \frac{\mu}{\nu}m =k, \cr
&{\underline{y_n}}= S({\underline{y_{n-1}}}), ~~~~{\overline{y_n}}= S({\overline{y_{n-1}}}) , ~~n\geq 1.
\end{aligned}
\end{equation}
One has 
\begin{equation}\label{2.8}
{\underline{y_0}}= 0 \leq {\underline{y_1}} \leq \cdots \leq {\underline{y_n}} \leq {\overline{y_n}} \leq\cdots  \leq {\overline{y_1}}  \leq {\overline{y_0}}= \frac{\mu}{\nu}m=k.
\end{equation}
Indeed since ${\underline{y_1}} = S({\underline{y_0}})=S(0)$ one has for $\forall \varphi \in V$, $ \varphi \geq 0$, 
\begin{equation*}\label{2.9}
\begin{aligned}
\int_{\Om_\ell} D\nabla \underline{y_1}\cdot \nabla \varphi ~dx  & + \int_{\Om_\ell} \lambda \underline{y_1} \varphi ~dx + \int_{\Gamma_0} \nu \underline{y_1}(x_1,0)\varphi  ~ dx_1\cr&=  \int_{\Om_\ell} f(\underline{y_0})\varphi ~dx + \int_{\Om_\ell} \lambda \underline{y_0} \varphi ~dx  +\int_{\Gamma_0} \mu w \varphi ~dx_1 \cr
& \geq \int_{\Om_\ell} D\nabla \underline{y_0}\cdot \nabla \varphi ~dx   + \int_{\Om_\ell} \lambda \underline{y_0} \varphi ~dx + \int_{\Gamma_0} \nu \underline{y_0}(x_1,0)\varphi  ~ dx_1, 
\end{aligned}
\end{equation*}
since $\underline{y_0}$ is a subsolution to \eqref{2.2}. Using this inequality with $ \varphi = (\underline{y_1} -\underline{y_0})^-$
one derives easily 
\begin{equation*}
\int_{\Om_\ell} D\nabla (\underline{y_1}-\underline{y_0})\cdot \nabla (\underline{y_1} -\underline{y_0})^-~dx   + \int_{\Om_\ell} \lambda(\underline{y_1}-\underline{y_0})(\underline{y_1} -\underline{y_0})^- ~dx + \int_{\Gamma_0} \nu (\underline{y_1}-\underline{y_0})(\underline{y_1} -\underline{y_0})^- ~ dx_1\geq 0. 
\end{equation*}
Thus it follows that $\underline{y_1}\geq\underline{y_0}$. With a similar proof one gets that  $\overline{y_1}\leq\overline{y_0}$.
Applying $S^{n-1}$ to these inequalilties leads to 
\begin{equation*}
S^{n-1}(\underline{y_0}) = \underline{y_{n-1}} \leq  \underline{y_n}\ = S^{n-1}(\underline{y_1})~~, ~~S^{n-1}(\overline{y_0}) = \overline{y_{n-1} }\geq \overline{y_{n}} =S^{n-1}(\overline{y_1}).
\end{equation*}
Furthermore from $ \underline{y_0} \leq \overline{y_0}$ one derives by applying $S^n$ to both sides of the inequality 
\begin{equation*}
\underline{y_n} \leq  \overline{y_{n}}.
\end{equation*}
This completes the proof of \eqref{2.8}. Then for some functions $\underline{v},\overline{v}$ in $L^2(\Om_\ell)$ one has 
\begin{equation*}
\underline{y_n} \to \underline{v},~~   \overline{y_{n}} \to  \overline{v}~~\text{ in } L^2(\Om_\ell).
\end{equation*}
Clearly $ \underline{v}$ and $ \overline{v}$ are fixed point for $S$ and thus (see \eqref{2.5}) solutions to \eqref{2.2}. This completes the proof of the lemma.
\end{proof}

\vskip .3 cm

We denote by $\lambda_1= \lambda_1(\Om_\ell)$ the first eigenvalue of the Dirichlet problem in $\Om_\ell$ and by $\varphi_1$ the corresponding first eigenfunction  positive and normalised. More precisely $(\lambda_1, \varphi_1)$ is such that
\begin{equation}\label{2.10}
\begin{cases} -\Delta \varphi_1 = \lambda_1 \varphi_1 \text{ in } \Om_\ell, \cr
\varphi_1 = 0 \text{ on } \partial \Om_\ell, \cr
\varphi_1  (0,\frac{L}{2}) =1.
\end{cases}.
\end{equation}

We suppose that for $s>0$ small enough one has 
\begin{equation}\label{2.11}
\lambda_1 \leq \frac{f(s)}{Ds}.
\end{equation}

Then one has :
\begin{lemma}
	\label{lemma-2.2}
	 Under the assumptions of the preceding lemma and \eqref{2.11}, for $\epsilon>0$ small enough, the maximal solution $\overline v$ to \eqref{2.2} satisfies 
\begin{equation*}\label{2.12}
\epsilon \varphi_1  \leq \overline v.
\end{equation*}
 In particular $\overline v$ is bounded away from $0$.
\end{lemma}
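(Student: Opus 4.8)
The plan is to show that $\epsilon\varphi_1$ is a subsolution to \eqref{2.2} for $\epsilon>0$ small, and then to exploit the fact that the maximal solution $\overline v$ is obtained (in the proof of Lemma~\ref{lemma-2.1}) as the monotone limit of the iterates $\overline{y_n}=S^n(k)$ starting from the supersolution $k=\tfrac{\mu}{\nu}m$; since $\epsilon\varphi_1$ sits below that supersolution, the monotonicity of $S$ will push $\epsilon\varphi_1$ below every iterate, hence below $\overline v$. So the first step is to verify that for $\epsilon$ small $\epsilon\varphi_1\le k$ on $\Om_\ell$ (clear, since $\varphi_1$ is bounded) and that
\[
\int_{\Om_\ell} D\nabla(\epsilon\varphi_1)\cdot\nabla\varphi\,dx + \int_{\Gamma_0}\nu\,(\epsilon\varphi_1)(x_1,0)\varphi\,dx_1 \le \int_{\Om_\ell} f(\epsilon\varphi_1)\varphi\,dx + \int_{\Gamma_0}\mu w\,\varphi\,dx_1
\]
for all $\varphi\in V$, $\varphi\ge0$. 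On $\Gamma_0$ we have $\varphi_1=0$ (it vanishes on all of $\partial\Om_\ell$), so the boundary term on the left is zero and the $\Gamma_0$-inequality is trivial because $\mu w\ge0$. The interior inequality reduces, using $-\Delta\varphi_1=\lambda_1\varphi_1$, to $D\lambda_1\epsilon\varphi_1 \le f(\epsilon\varphi_1)$ pointwise, i.e. $D\lambda_1 \le f(\epsilon\varphi_1)/(\epsilon\varphi_1)$ wherever $\varphi_1>0$.

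The second step is to make that pointwise inequality hold. By \eqref{2.11} there is $s_0>0$ with $\lambda_1\le f(s)/(Ds)$ for $0<s\le s_0$; since $0<\epsilon\varphi_1\le \epsilon\,\|\varphi_1\|_\infty$ on $\{\varphi_1>0\}$, choosing $\epsilon$ so small that $\epsilon\,\|\varphi_1\|_\infty\le s_0$ gives $D\lambda_1\le f(\epsilon\varphi_1(x))/(\epsilon\varphi_1(x))$ at every point where $\varphi_1(x)>0$, and at points where $\varphi_1(x)=0$ both sides of the weak inequality vanish. Hence $\epsilon\varphi_1$ is a (weak) subsolution of \eqref{2.2}.

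The third step transfers this to $\overline v$. Because $x\mapsto \lambda x-f(x)$ is nondecreasing for $\lambda\ge L_f$ (see \eqref{1.4}), the same computation as in Lemma~\ref{lemma-2.1} shows that applying $S$ preserves the subsolution ordering: from $\epsilon\varphi_1\le \overline{y_0}=k$ and $\epsilon\varphi_1$ a subsolution one gets $\epsilon\varphi_1\le S(\epsilon\varphi_1)\le S(k)=\overline{y_1}$ by testing the relevant inequality with $(S(\epsilon\varphi_1)-\epsilon\varphi_1)^-$ and with the monotonicity of $S$; iterating, $\epsilon\varphi_1\le\overline{y_n}$ for all $n$, and passing to the limit $\overline{y_n}\to\overline v$ in $L^2(\Om_\ell)$ yields $\epsilon\varphi_1\le\overline v$ a.e. Since $\varphi_1>0$ in $\Om_\ell$ and $\varphi_1(0,L/2)=1$, this gives the claimed lower bound and in particular shows $\overline v$ is bounded away from $0$ on compact subsets.

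I expect the main obstacle to be purely technical bookkeeping: verifying carefully that $\epsilon\varphi_1$ is an \emph{admissible} subsolution in the weak sense on the mixed-boundary domain (in particular that its trace on $\Gamma_0$ genuinely vanishes, so the flux term drops out) and checking that the comparison argument of Lemma~\ref{lemma-2.1}, which was stated for the iterates starting at $0$ and $k$, applies verbatim with $\epsilon\varphi_1$ in place of $\underline{y_0}=0$. Both amount to re-running the monotonicity computation already given, so no genuinely new difficulty arises; the only real input is the eigenvalue condition \eqref{2.11}, which is exactly what forces $f$ to dominate the linear diffusion near $0$.
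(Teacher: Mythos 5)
Your proof is correct and follows essentially the same route as the paper: one checks via \eqref{2.11} that $\epsilon \varphi_1$ is a subsolution of \eqref{2.2} for $\epsilon$ small, then reruns the monotone iteration of Lemma \ref{lemma-2.1} starting from $\underline{y_0}=\epsilon\varphi_1$ and $\overline{y_0}=\frac{\mu}{\nu}m$ and passes to the limit $\overline{y_n}\to\overline v$. The only detail you gloss over is that the integration by parts leaves the term $\int_{\Gamma_0} D\,\partial_{n}(\epsilon\varphi_1)\,\varphi \,d\sigma$ on the road (where the test function $\varphi$ need not vanish), which is $\leq 0$ because $\partial_{n}\varphi_1\leq 0$ there; the paper records this sign explicitly.
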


\begin{proof}
Due to \eqref{2.11} one has for $\epsilon>0$ small enough
\begin{equation*}
D\lambda_1 \epsilon \varphi_1 \leq f( \epsilon \varphi_1).
\end{equation*}
This allows us to show that $ \epsilon \varphi_1$ is a subsolution to \eqref{2.2}. 
Indeed, for $\varphi \in V$, $\varphi \geq 0$ it holds after integration by parts
\begin{equation*}
\begin{aligned}
\int_{\Om_\ell} D \nabla( \epsilon \varphi_1)\cdot \nabla \varphi ~dx &+ \int_{\Gamma_0} \nu \epsilon \varphi_1(x_1,0) \varphi ~dx_1  \cr
&=   \int_{\Om_\ell} D \nabla( \epsilon \varphi_1)\cdot \nabla \varphi ~dx =  \int_{\Om_\ell}  \nabla \cdot (D \nabla (\epsilon \varphi_1) \varphi )- D\Delta (\epsilon \varphi_1) \varphi  ~dx\cr
 &  = \int_{\partial \Om_\ell}D\partial_{n} (\epsilon \varphi_1)  \varphi d\sigma +  \int_{\Om_\ell}  D \lambda_1 (\epsilon \varphi_1) \varphi  ~dx\cr
 &\leq  \int_{\Om_\ell}  f(\epsilon \varphi_1) \varphi  ~dx + \int_{\Gamma_0} \mu w \varphi ~dx_1. \cr
\end{aligned}
\end{equation*}
($n$ denotes the outward unit normal to $\Om_\ell$, note that $\partial_{n} (\epsilon \varphi_1) \leq 0$). Thus $\epsilon \varphi_1$ is a positive subsolution to \eqref{2.2}.
\vskip .3 cm
Then one argues as in the preceding lemma introducing the sequence 
defined for $\epsilon$ small by :
\begin{equation*}
\begin{aligned}
&{\underline{y_0}}= \epsilon \varphi_1 \leq \frac{\mu}{\nu}m,~~~~ {\overline{y_0}}= \frac{\mu}{\nu}m, \cr
&{\underline{y_n}}= S({\underline{y_{n-1}}}), ~~~~{\overline{y_n}}= S({\overline{y_{n-1}}}) , ~~n\geq 1.
\end{aligned}
\end{equation*}
One has with the same proof as above 
\begin{equation*}
{\underline{y_0}}= \epsilon \varphi_1 \leq {\underline{y_1}} \leq \cdots \leq {\underline{y_n}} \leq {\overline{y_n}} \leq\cdots  \leq {\overline{y_1}}  \leq {\overline{y_0}}= \frac{\mu}{\nu}m.
\end{equation*}
The result follows from the fact that $\overline{y_n} \to \overline{v}$. This completes the proof of the lemma.
\end{proof}

One has also :
\begin{lemma}\label{lemma-2.3} Suppose that 
\begin{equation}\label{2.13}
\frac{f(s)}{s} \text{ is decreasing on } (0,+\infty).
\end{equation}
If $v_1, v_2$ are positive solutions to \eqref{2.2} corresponding to $w_1,w_2$ respectively then 
\begin{equation*}\label{2.14}
w_1 \leq w_2 \text{ implies }  v_1 \leq v_2.
\end{equation*}
In particular \eqref{2.2} has a unique positive solution. 
\end{lemma}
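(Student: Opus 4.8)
The plan is to argue by a comparison/ordering trick that exploits the concavity hypothesis \eqref{2.13}, in the spirit of the classical uniqueness argument for sublinear reaction terms. Suppose $w_1 \leq w_2$ and let $v_1, v_2$ be the corresponding positive solutions. The key quantity to control is $(v_1 - t v_2)^+$ for suitable $t>0$; equivalently I want to show $v_1 \leq v_2$ by testing the difference of the two weak formulations with a cleverly chosen test function. First I would write down the two identities \eqref{2.2} for $(v_1,w_1)$ and $(v_2,w_2)$, subtract, and test with $\varphi = (v_1 - v_2)^+ \in V$ (note this is an admissible test function since both $v_i$ vanish on $\Gamma_1$). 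This yields, on the set $\{v_1 > v_2\}$,
\begin{equation*}
\int_{\Om_\ell} D|\nabla (v_1-v_2)^+|^2\,dx + \int_{\Gamma_0}\nu\,((v_1-v_2)^+)^2\,dx_1 = \int_{\Om_\ell}\{f(v_1)-f(v_2)\}(v_1-v_2)^+\,dx + \int_{\Gamma_0}\mu(w_1-w_2)(v_1-v_2)^+\,dx_1.
\end{equation*}
The boundary term on the right is $\leq 0$ because $w_1 \leq w_2$. The obstacle is the field term: $f$ is only Lipschitz, so $f(v_1)-f(v_2)$ need not have a favorable sign on $\{v_1>v_2\}$, and the naive bound loses a factor $L_f$ that cannot be absorbed.

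This is where \eqref{2.13} enters, and the main work of the proof. On the set $\{v_1 > v_2 > 0\}$ one has, writing $f(v_i) = \frac{f(v_i)}{v_i}v_i$ and using that $s\mapsto f(s)/s$ is decreasing,
\begin{equation*}
f(v_1) - f(v_2) = \frac{f(v_1)}{v_1}v_1 - \frac{f(v_2)}{v_2}v_2 \leq \frac{f(v_2)}{v_2}(v_1 - v_2),
\end{equation*}
since $\frac{f(v_1)}{v_1} \leq \frac{f(v_2)}{v_2}$ and $v_1 > v_2$. So the field integrand is bounded by $\frac{f(v_2)}{v_2}\,((v_1-v_2)^+)^2$. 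This still is not obviously nonpositive — $f(v_2)/v_2$ could be positive. The standard remedy is the Picone-type identity: instead of testing with $(v_1-v_2)^+$, test the $v_1$-equation with $\varphi_1 := \big(v_1 - \frac{v_1^2}{v_2}\wedge v_1\big)$-type functions, or more cleanly test the $v_1$-equation with $\frac{(v_1^2-v_2^2)^+}{v_1}$ and the $v_2$-equation with $\frac{(v_1^2-v_2^2)^+}{v_2}$, subtract, and use
\begin{equation*}
\nabla v_1 \cdot \nabla\!\Big(\tfrac{(v_1^2-v_2^2)^+}{v_1}\Big) - \nabla v_2 \cdot \nabla\!\Big(\tfrac{(v_1^2-v_2^2)^+}{v_2}\Big) \geq 0
\end{equation*}
pointwise (this is precisely Picone's inequality, valid since $v_1,v_2>0$). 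After this subtraction the gradient contributions drop out with a favorable sign, the $\lambda_1$-type and boundary terms combine, and one is left with
\begin{equation*}
0 \leq \int_{\{v_1>v_2\}}\Big(\tfrac{f(v_1)}{v_1} - \tfrac{f(v_2)}{v_2}\Big)(v_1^2 - v_2^2)\,dx + \int_{\Gamma_0}\mu\Big(\tfrac{w_1}{v_1} - \tfrac{w_2}{v_2}\Big)(v_1^2-v_2^2)^+\,dx_1 + (\text{good sign terms}).
\end{equation*}
By \eqref{2.13} the first integrand is $\leq 0$ on $\{v_1>v_2\}$; handling the $\Gamma_0$ term requires a small additional argument (using $w_1\le w_2$ together with a lower bound on $v_2$, or observing that the flux term $\nu v_i$ can be paired against it as in the $w_i$ side), and forces the set $\{v_1 > v_2\}$ to have measure zero, hence $v_1 \leq v_2$.

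For the uniqueness statement: if $v_1, v_2$ are both positive solutions for the \emph{same} $w$, then applying the monotonicity conclusion with $w_1 = w_2 = w$ gives both $v_1 \leq v_2$ and $v_2 \leq v_1$, hence $v_1 = v_2$. I expect the Picone identity manipulation — in particular making the boundary/flux terms on $\Gamma_0$ cooperate, since they are not of the standard "interior" form — to be the delicate point; everything else is a routine adaptation of the sublinear-uniqueness scheme.
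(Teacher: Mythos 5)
Your strategy is sound and reaches the right conclusion, but it is implemented differently from the paper. The paper also cross-tests the two equations, but with the functions $v_2\theta_\epsilon(v_1-v_2)$ and $v_1\theta_\epsilon(v_1-v_2)$, where $\theta_\epsilon$ is a smoothed Heaviside function; this produces the same antisymmetric combination your Picone computation does, since
\begin{equation*}
f(v_2)v_1-f(v_1)v_2=v_1v_2\Big(\tfrac{f(v_2)}{v_2}-\tfrac{f(v_1)}{v_1}\Big)>0 \quad\text{on }\{v_1>v_2\},
\end{equation*}
and the boundary contribution $\int_{\Gamma_0}\mu(w_2v_1-w_1v_2)\theta_\epsilon\,dx_1$ is nonnegative directly from $w_1\le w_2$ and $v_1>v_2\ge 0$. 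The price the paper pays is a leftover gradient term $\int_{\Om_\ell}D\nabla v_2\cdot\nabla\gamma_\epsilon(v_1-v_2)\,dx$ with $\gamma_\epsilon(t)=\int_0^t s\theta_\epsilon'(s)\,ds$, which is killed by using the $v_2$-equation once more together with $\gamma_\epsilon\le C\epsilon$ and letting $\epsilon\to 0$. What the paper's version buys is that one never divides by $v_1$ or $v_2$, so the test functions are trivially in $V$. In your version the boundary terms are actually \emph{cleaner} than you anticipate: the $\nu$ terms cancel identically because $v_i\cdot\frac{(v_1^2-v_2^2)^+}{v_i}=(v_1^2-v_2^2)^+$ for $i=1,2$, and the $\mu$ term is $\int_{\Gamma_0}\mu\big(\tfrac{w_1}{v_1}-\tfrac{w_2}{v_2}\big)(v_1^2-v_2^2)^+\,dx_1\le 0$ since $w_1\le w_2$ and $v_1>v_2>0$ on the support; no extra lower bound on $v_2$ is needed.

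The one genuine gap in your sketch is the admissibility of $\frac{(v_1^2-v_2^2)^+}{v_1}$ and $\frac{(v_1^2-v_2^2)^+}{v_2}$ as elements of $V$: the solutions vanish on $\Gamma_1$, and a weak solution of \eqref{2.2} is a priori only in $H^1\cap L^\infty$, so these quotients need not be in $H^1(\Om_\ell)$ without further justification. The standard repair is to test with $\frac{(v_1^2-v_2^2)^+}{v_i+\delta}$ (which vanishes on $\Gamma_1$ and is manifestly in $V$), run the Picone/Br\'ezis--Oswald computation with the $\delta$-perturbed quotients, and pass to the limit $\delta\to 0$ by monotone or dominated convergence, using that on $\{v_1>v_2\}$ all the relevant ratios $v_2/v_1$, $v_2^2/v_1^2$ are bounded by $1$. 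With that regularization supplied, your argument closes; the uniqueness deduction at the end is exactly as in the paper.
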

\begin{proof} Denote by $\theta$ a smooth function such that 
\begin{equation*}
\theta(t) =0 ~~\forall  t\leq 0,~~ \theta(t) =1 ~~ \forall  t\geq 1, ~~\theta'(t) \geq 0.
\end{equation*}
Set $\theta_\epsilon(t)= \theta(\frac{t}{\epsilon})$. Clearly 
\begin{equation*}
v_1\theta_\epsilon(v_1-v_2),~v_2\theta_\epsilon(v_1-v_2) \in V.
\end{equation*}

From the equations satisfied by $v_1,v_2$ one gets, setting $\theta_\epsilon =\theta_\epsilon(v_1-v_2) $,
\begin{equation*}
\int_{\Om_\ell} D\nabla v_1\cdot \nabla (v_2\theta_\epsilon) ~dx +  \int_{\Gamma_0} \nu v_1(x_1,0)(v_2\theta_\epsilon) ~ dx_1 =  \int_{\Om_\ell} f(v_1)(v_2\theta_\epsilon) ~dx + \int_{\Gamma_0} \mu w_1(v_2\theta_\epsilon) ~dx_1,
\end{equation*}
\begin{equation*}
\int_{\Om_\ell} D\nabla v_2\cdot \nabla (v_1\theta_\epsilon) ~dx +  \int_{\Gamma_0} \nu v_2(x_1,0)(v_1\theta_\epsilon) ~ dx_1 =  \int_{\Om_\ell} f(v_2)(v_1\theta_\epsilon) ~dx + \int_{\Gamma_0} \mu w_2(v_1\theta_\epsilon) ~dx_1.
\end{equation*}
By subtraction we obtain
\begin{equation*}
\begin{aligned}
\int_{\Om_\ell} D\{\nabla v_2\cdot \nabla (v_1\theta_\epsilon) -\nabla v_1\cdot \nabla (v_2\theta_\epsilon)\}~dx  =  \int_{\Om_\ell} &f(v_2)(v_1\theta_\epsilon)- f(v_1)(v_2\theta_\epsilon) ~dx\cr & + \int_{\Gamma_0} \mu (w_2v_1-w_1v_2)\theta_\epsilon (v_1-v_2)~dx_1.
\end{aligned}
\end{equation*}
Clearly the last integral above is nonnegative so that one has 
\begin{equation*}
\int_{\Om_\ell} D\{\nabla v_2\cdot \nabla (v_1\theta_\epsilon) -\nabla v_1\cdot \nabla (v_2\theta_\epsilon)\}~dx  \geq  \int_{\Om_\ell} f(v_2)(v_1\theta_\epsilon)- f(v_1)(v_2\theta_\epsilon) ~dx .
 \end{equation*}
By a simple computation writing $\theta'_\epsilon $ for $ \theta'_\epsilon(v_1-v_2) $ one derives 
\begin{equation*}
\begin{aligned}
\int_{\Om_\ell} f(v_2)(v_1\theta_\epsilon)&- f(v_1)(v_2\theta_\epsilon) ~dx  \leq \int_{\Om_\ell} D\{\nabla v_2\cdot \nabla (v_1\theta_\epsilon) -\nabla v_1\cdot \nabla (v_2\theta_\epsilon)\}~dx  \cr
& =  \int_{\Om_\ell} D\{\nabla v_2\cdot \nabla (v_1 -v_2)\theta'_\epsilon v_1 - \nabla v_1\cdot \nabla (v_1 -v_2)\theta'_\epsilon v_2\}~dx \cr
&= \int_{\Om_\ell} D \{v_1 \nabla v_2 -v_2 \nabla v_1\}\cdot\nabla (v_1 -v_2)\theta'_\epsilon~dx \cr
&= \int_{\Om_\ell} D \{v_1 \nabla v_2 -v_2 \nabla v_2+v_2 \nabla v_2-v_2 \nabla v_1\}\cdot\nabla (v_1 -v_2)\theta'_\epsilon~dx \cr
&=\int_{\Om_\ell} D \nabla v_2\cdot\nabla (v_1 -v_2)(v_1 -v_2) \theta'_\epsilon ~dx -\int_{\Om_\ell} D v_2|\nabla (v_1 -v_2)|^2\theta'_\epsilon~dx \cr 
&\leq \int_{\Om_\ell} D \nabla v_2\cdot\nabla (v_1 -v_2)(v_1 -v_2) \theta'_\epsilon ~dx.\cr
\end{aligned}
 \end{equation*}
 Let us set $\gamma_\epsilon(t) = \int_0^t s\theta'_\epsilon(s)ds$ in such a way that the inequality above reads 
 \begin{equation*}
\int_{\Om_\ell} f(v_2)(v_1\theta_\epsilon)- f(v_1)(v_2\theta_\epsilon) ~dx  \leq  \int_{\Om_\ell} D \nabla v_2\cdot\nabla \gamma_\epsilon(v_1 -v_2) ~dx.
 \end{equation*}
From the equation satisfied by $v_2 $, since $\gamma_\epsilon(v_1 -v_2) \in V$ and $v_2, \gamma_\epsilon$ are nonnegative one has 
 \begin{equation*}
\int_{\Om_\ell}  D \nabla v_2\cdot\nabla \gamma_\epsilon(v_1 -v_2) ~dx \leq \int_{\Om_\ell} f(v_2)  \gamma_\epsilon(v_1 -v_2) ~dx + \int_{\Gamma_0} \mu w_2  \gamma_\epsilon(v_1 -v_2) ~dx_1 .
 \end{equation*}
 Since for some constant $C$
  \begin{equation*}
\gamma_\epsilon(t) \leq \int_0^\epsilon s\theta'(\frac{s}{\epsilon})\frac{1}{\epsilon}ds\leq  C\epsilon
\end{equation*}
the right hand side of the two inequalities above goes to $0$ when $\epsilon \to 0$. Since when $\epsilon \to 0$ one has $\theta_\epsilon(v_1-v_2) \to \chi_{\{v_1>v_2\}}$ the characteristic function of the set 
$\{v_1>v_2\} = \{ x\in \Om_\ell~ |~v_1(x)>v_2(x) \}$ one gets 
 \begin{equation*}
\int_{\{v_1>v_2\}}  f(v_2)v_1- f(v_1)v_2 ~dx  \leq 0.
 \end{equation*}
But on the set of integration thanks to \eqref{2.13} one has $f(v_2)v_1- f(v_1)v_2 >0$ hence the set of integration is necessarily of measure $0$, i.e. $v_1 \leq v_2$. This completes the proof of the lemma.
\end{proof}

\section{The main result} 

\begin{theorem}
\label{thm-3.1}	
 Suppose that \eqref{1.2}-\eqref{1.6},\eqref{2.11},\eqref{2.13} hold, then the problem \eqref{1.1} admits a nontrivial solution.
\end{theorem}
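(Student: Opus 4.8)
The plan is to decouple the system \eqref{1.1}: view the field variable as a function of the road datum via the sub-/supersolution results of Section~2, and close the loop by a monotone iteration on the road variable $u$. Fix once and for all $\eta\ge L_g$, so that by \eqref{1.3}--\eqref{1.4} the map $t\mapsto \eta t+g(t)$ is nondecreasing and vanishes at $t=0$. For every measurable $w$ with $0\le w\le m$ let $v(w)$ denote the unique positive solution of \eqref{2.2} supplied by Lemmas~\ref{lemma-2.1} and~\ref{lemma-2.3}; then $0<v(w)<\tfrac{\mu}{\nu}m$, and by (the proof of) Lemma~\ref{lemma-2.2} one has $v(w)\ge\epsilon\varphi_1$ for an $\epsilon>0$ \emph{independent of} $w$, since $\epsilon\varphi_1$ is a subsolution of \eqref{2.2} for every $w\ge0$. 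Moreover, by Lemma~\ref{lemma-2.3}, $w_1\le w_2$ forces $v(w_1)\le v(w_2)$.

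Next I would set up the iteration $\overline u_0\equiv m$, $\overline v_n:=v(\overline u_n)$, and let $\overline u_{n+1}\in H^1_0(\Gamma_0)$ be the unique (Lax--Milgram) solution of
\begin{equation*}
\int_{\Gamma_0} D'\overline u_{n+1}'\,\psi'+(\mu+\eta)\,\overline u_{n+1}\,\psi\,dx_1=\int_{\Gamma_0}\bigl(\eta\,\overline u_n+g(\overline u_n)+\nu\,\overline v_n(x_1,0)\bigr)\psi\,dx_1\qquad\forall\,\psi\in H^1_0(\Gamma_0).
\end{equation*}
Using the weak maximum principle for the coercive operator $-D'\partial_{x_1}^2+(\mu+\eta)$ on $H^1_0(\Gamma_0)$, I would prove by induction that $0\le\overline u_{n+1}\le\overline u_n\le m$ for all $n$. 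Indeed, assuming $0\le\overline u_n\le m$: comparing $\overline u_{n+1}$ with the constant $m$ gives $\overline u_{n+1}\le m$, because $\eta\overline u_n+g(\overline u_n)\le\eta m$ (the monotone shift together with $g(m)\le0$ from \eqref{1.6}) while $\nu\overline v_n(\cdot,0)<\mu m$ (Lemma~\ref{lemma-2.1}), so the right-hand side is $\le(\mu+\eta)m$; comparing with the constant $0$ gives $\overline u_{n+1}\ge0$, the right-hand side being $\ge\eta\cdot0+g(0)+0=0$; and if in addition $\overline u_n\le\overline u_{n-1}$, then $\overline v_n\le\overline v_{n-1}$ by Lemma~\ref{lemma-2.3} and $\eta\overline u_n+g(\overline u_n)\le\eta\overline u_{n-1}+g(\overline u_{n-1})$, so the right-hand side decreases and $\overline u_{n+1}\le\overline u_n$. (The base step $\overline u_1\le\overline u_0=m$ is the same comparison.)

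Hence $\overline u_n\downarrow u^{*}$ in $L^2(\Gamma_0)$ and, by Lemma~\ref{lemma-2.3}, $\overline v_n\downarrow v^{*}$ in $L^2(\Om_\ell)$. Testing the defining problem of $\overline u_{n+1}$ with $\overline u_{n+1}$, and problem \eqref{2.2} with $\overline v_n$, yields uniform bounds in $H^1_0(\Gamma_0)$, resp.\ $H^1(\Om_\ell)$; thus $\overline u_n\rightharpoonup u^{*}$ in $H^1_0(\Gamma_0)$ and $\overline v_n\rightharpoonup v^{*}$ in $H^1(\Om_\ell)$, with $\overline v_n(\cdot,0)\to v^{*}(\cdot,0)$ in $L^2(\Gamma_0)$ by compactness of the trace. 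Since $f,g$ are continuous and all iterates are uniformly bounded, dominated convergence lets me pass to the limit in both equations: $v^{*}$ solves \eqref{2.2} with $w=u^{*}$, i.e.\ the second line of \eqref{1.1}, and $u^{*}$ solves $\int_{\Gamma_0}D'(u^{*})'\psi'+\mu u^{*}\psi\,dx_1=\int_{\Gamma_0}\bigl(g(u^{*})+\nu v^{*}(\cdot,0)\bigr)\psi\,dx_1$, i.e.\ the third line of \eqref{1.1}. Since $V$ and $H^1_0(\Gamma_0)$ are weakly closed, $(u^{*},v^{*})\in H^1_0(\Gamma_0)\times V$ is a solution of \eqref{1.1}. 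It is nontrivial because $\overline v_n=v(\overline u_n)\ge\epsilon\varphi_1$ for every $n$, whence $v^{*}\ge\epsilon\varphi_1>0$.

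The step I expect to be most delicate is keeping the road iterates trapped in $[0,m]$ so that Lemmas~\ref{lemma-2.1}--\ref{lemma-2.3} apply at each stage --- this is precisely where the structural assumptions $g(0)=0$, $g(m)\le0$ and $m\ge\nu/\mu$ (i.e.\ $\tfrac{\mu}{\nu}m\ge1$) enter --- and, hand in hand with it, propagating the ordering through the nonlocal field--road coupling by means of Lemma~\ref{lemma-2.3}. Once the iteration is correctly set up, the passage to the limit and the nontriviality are routine, the latter relying solely on the lower bound of Lemma~\ref{lemma-2.2} being uniform in $w$.
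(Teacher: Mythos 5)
Your argument is correct, but it closes the outer fixed point by a different device than the paper. Both proofs rest on the same decoupling: the map $w\mapsto v(w)$ given by Lemmas~\ref{lemma-2.1}--\ref{lemma-2.3} (uniqueness in Lemma~\ref{lemma-2.3} is what makes this map single-valued and order-preserving, and the uniform lower bound $v(w)\ge\epsilon\varphi_1$ of Lemma~\ref{lemma-2.2} is what makes the final solution nontrivial), and the same shifted road operator $-D'\partial_{x_1}^2+(\mu+\eta)$ with right-hand side $\eta u+g(u)+\nu v(u)(\cdot,0)$ --- your iteration map is exactly the paper's $T$ in \eqref{3.1}. The paper then verifies that $T$ is continuous on $K=\{0\le u\le m\}$ (which requires an $H^1$ bound, trace compactness, and Lemma~\ref{lemma-2.3} again to identify the limit of $v(u_n)$), that $T(K)\subset C^{1/2}(\Gamma_0)$ is relatively compact, and invokes the Schauder fixed point theorem; monotonicity of $T$ is used there only to propagate the barriers $T(0)\ge0$, $T(m)\le m$ to all of $K$. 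You instead exploit that same monotonicity to run a decreasing iteration from the supersolution $\overline u_0\equiv m$, which converges monotonically and dispenses with both the continuity of $T$ and Schauder; the price is that the limit passage must be justified directly (which you do, with the same $H^1$/trace-compactness estimates the paper uses in its continuity step), and the payoff is that you obtain a distinguished (maximal) solution rather than bare existence. This is in the spirit of the Amann-type scheme the paper already uses inside Lemma~\ref{lemma-2.1}, extended to the full coupled system. Two small remarks: the nontriviality you establish is $v^{*}\not\equiv0$, which suffices for Theorem~\ref{thm-3.1} (the further fact $u^{*}\not\equiv0$ is the separate Proposition~\ref{prop-3.1}); and your identification of the unique positive solution of \eqref{2.2} with the maximal one (so that the lower bound $\epsilon\varphi_1$ applies to $v(w)$) is legitimate, since Lemma~\ref{lemma-2.2} shows the maximal solution is positive.
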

\begin{proof} As mentioned above it is of course clear that $(0,0)$ is solution to \eqref{1.1}. Set 
 \begin{equation*}
K = \{v \in L^2({\Gamma_0}) ~|~ 0\leq v\leq m\}.
 \end{equation*}
 For $u \in K$, let $\overline v$ be the unique positive solution to \eqref{2.2} associated to $w=u$. For $\eta \geq L_g$ let $U=T(u)$ the solution to 
 \begin{equation}\label{3.1}
\begin{cases}
U\in H^1_0(\Gamma_0), \cr
\int_{\Gamma_0} D'U'\psi'  +\mu U \psi + \eta U \psi~dx_1 = \int_{\Gamma_0} \nu \overline{v}(x_1,0)\psi +  g(u) \psi + \eta u \psi~dx_1  ~~\forall \psi  \in H^1_0(\Gamma_0). \cr
\end{cases} 
\end{equation}
The existence of $U$ is a consequence of the Lax-Milgram theorem.
\vskip .3 cm
We claim that $T$ is continuous on $K\subset L^2({\Gamma_0})$. Indeed suppose that $u_n \to u$ in $K$. Denote by $\overline{v}_n$ the solution to \eqref{2.2} associated to $u_n$ i.e. satisfying 
 \begin{equation}\label{3.2}
\int_{\Om_\ell} D\nabla\overline{v}_n\cdot \nabla \varphi ~dx +  \int_{\Gamma_0} \nu\overline{v}_n(x_1,0)\varphi  ~ dx_1 =  \int_{\Om_\ell} f(\overline{v}_n)\varphi ~dx + \int_{\Gamma_0} \mu u_n \varphi ~dx_1 ~~\forall \varphi \in V.
\end{equation}
Since $\overline{v}_n$ and $u_n$ are bounded, taking $\varphi= \overline{v}_n$ in the equality above one gets easily 
\begin{equation*}
\int_{\Om_\ell} D|\nabla\overline{v}_n|^2~dx +  \int_{\Gamma_0} \nu\overline{v}_n(x_1,0)^2  ~ dx_1 \leq C,
\end{equation*}
where $C$ is a constant independent of $n$. Thus, up to a subsequence, there exists $v \in V$ such that 
\begin{equation*}\label{3.3}
\overline{v}_n \rightharpoonup v \text{ in } H^1({\Om_\ell}), ~~\overline{v}_n \to v \text{ in } L^2({\Om_\ell}),~~\overline{v}_n(.\,,0)  \to v(.\,,0) \text{ in } L^2({\Gamma_0}).
\end{equation*}
Passing to the limit in \eqref{3.2} it follows from Lemma \ref{lemma-2.3}  that $v= \overline{v}$ the solution to \eqref {2.2} corres\-ponding to $w=u$. By uniqueness of the limit one 
has convergence of the whole sequence and in particular 
\begin{equation*}
\overline{v}_n(.\,,0)  \to \overline{v}(.\,,0) \text{ in } L^2({\Gamma_0}).
\end{equation*}
Passing to the limit in  \eqref{3.1} written for $u=u_n$ one derives $T(u_n) \to T(u)$ in $ L^2({\Gamma_0})$. 
\vskip .3 cm
We can show also that $T$ is monotone. Indeed, suppose that $u_1 \geq u_2$ and set $U_i=T(u_i)$, $i=1,2$. One has 
  \begin{equation*}
  \begin{aligned}
\int_{\Gamma_0} D'U_1'\psi'  +\mu U_1 \psi + \eta U_1 \psi~dx_1 = \int_{\Gamma_0} \nu \overline{v}_1(x_1,0)\psi +  g(u_1) \psi + \eta u_1 \psi~dx_1  ~~\forall \psi  \in H^1_0(\Gamma_0), \cr
\int_{\Gamma_0} D'U_2'\psi'  +\mu U_2 \psi + \eta U_2 \psi~dx_1 = \int_{\Gamma_0} \nu \overline{v}_2(x_1,0)\psi +  g(u_2) \psi + \eta u_2 \psi~dx_1  ~~\forall \psi  \in H^1_0(\Gamma_0). \cr
 \end{aligned}
\end{equation*}
By subtraction it comes 
  \begin{equation*}
  \begin{aligned}
\int_{\Gamma_0}& D'(U_1-U_2)'\psi'  +\mu (U_1-U_2) \psi + \eta (U_1-U_2) \psi~dx_1 \cr &= \int_{\Gamma_0} \nu (\overline{v}_1(x_1,0)-\overline{v}_2(x_1,0))\psi +  (g(u_1)-g(u_2)) \psi + \eta (u_1-u_2) \psi~dx_1  ~~\forall \psi  \in H^1_0(\Gamma_0). \cr
\end{aligned}
\end{equation*}
Choosing $\psi= -(U_1-U_2)^-$ and taking into account that, by Lemma \ref{lemma-2.3}, $\overline{v}_1(x_1,0)-\overline{v}_2(x_1,0)\geq 0$ and that for $\eta \geq L_g$, $(g(u_1)-g(u_2)) + \eta (u_1-u_2) \geq 0$ (Cf. \eqref{1.3},
\eqref{1.4}), one gets
 \begin{equation*}
\int_{\Gamma_0} D'|\{(U_1-U_2)^-\}'|^2  +\mu \{(U_1-U_2)^-\}^2 + \eta \{(U_1-U_2)^-\}^2~dx_1 \leq 0.
\end{equation*}
Thus $(U_1-U_2)^-=0$ and $ T(u_1) \geq T(u_2)$.
\vskip .3 cm
Next we assert that $T$ maps $K$ into itself. Indeed, if $U_0=T(0)$ one has, with an obvious notation for $\overline{v}_0$
\begin{equation*}
\int_{\Gamma_0} D'U_0'\psi'  +\mu U_0 \psi + \eta U_0 \psi~dx_1 = \int_{\Gamma_0} \nu \overline{v}_0(x_1,0)\psi ~dx_1  ~~\forall \psi  \in H^1_0(\Gamma_0).
\end{equation*}
Taking $\psi = -U_0^-$ one deduces easily since $\overline{v}_0 >0$ that $U_0 = T(0) \geq 0$.
Similarly if $U_m = T(m)$ one has, with an obvious notation for $\overline{v}_m$
\begin{equation*}
\int_{\Gamma_0} D'U_m'\psi'  +\mu U_m \psi + \eta U_m \psi~dx_1 = \int_{\Gamma_0} \nu  \overline{v}_m(x_1,0)\psi + g(m) \psi + \eta m \psi~dx_1  ~~\forall \psi  \in H^1_0(\Gamma_0).
\end{equation*}
Thus choosing $\psi = (U_m-m)^+$ it comes since $g(m) \leq 0$, $\overline{v}_m \leq \frac{\mu}{\nu}m$
\begin{equation*}
 \begin{aligned}
\int_{\Gamma_0} D'|\{(U_m-m)^+\}'|^2  &+(\mu  + \eta) \{(U_m-m)^+\}^2 ~dx_1  \cr&= \int_{\Gamma_0} (\nu \overline{v}_m  -\mu m) (U_m-m)^+dx_1 \leq 0.
 \end{aligned}
\end{equation*}
From which it follows that $U_m \leq m$. By the monotonicity of $T$ it results that $T$ maps the convex $K$ into itself. But clearly $T(K) \subset C^\frac{1}{2}(\Gamma_0)$  is relatively compact in 
$L^2(\Gamma_0)$. Thus, by the Schauder fixed point theorem (see \cite{E},  \cite{GT},  \cite{C}), $T$ has a fixed point in $K$ which leads to a nontrivial solution to \eqref{1.1}. This completes the proof of the theorem.
\end{proof}

If it is clear at this point that the solution we constructed is non degenerate in $v$ it is not clear that the same holds for $u$. In fact we have : 
\begin{proposition}
	\label{prop-3.1}
 Let $(u,v)$ be the solution constructed in Theorem \ref{thm-3.1}. One has 
\begin{equation*}
u\not \equiv 0.
\end{equation*}
\end{proposition}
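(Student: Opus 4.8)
The plan is to argue by contradiction: suppose $u \equiv 0$. Then the second equation of \eqref{1.1} forces the boundary trace of $v$ to vanish in an averaged sense, and combined with the fact that $v = \overline{v}$ is bounded away from $0$ (Lemma \ref{lemma-2.2}) we should reach a contradiction. More precisely, if $u \equiv 0$ then $U = T(u) = T(0) = U_0$, and the fixed point property gives $U_0 = 0$. Feeding $U_0 = 0$ into equation \eqref{3.1} with $u = 0$, we get
\begin{equation*}
\int_{\Gamma_0} \nu \overline{v}_0(x_1,0)\psi ~dx_1 = 0 \qquad \forall \psi \in H^1_0(\Gamma_0),
\end{equation*}
hence $\overline{v}_0(x_1,0) = 0$ a.e. on $\Gamma_0$. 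Since $\nu > 0$, this means the trace of $\overline{v}$ on $\Gamma_0$ vanishes identically.

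The next step is to combine this trace condition with the first equation of \eqref{1.1} (equivalently \eqref{2.2} with $w = u = 0$). With $\overline{v}(\cdot,0) = 0$ and $w = 0$, the boundary integrals in \eqref{2.2} all drop out, so $\overline{v}$ satisfies
\begin{equation*}
\int_{\Om_\ell} D\nabla \overline{v}\cdot \nabla \varphi ~dx = \int_{\Om_\ell} f(\overline{v})\varphi ~dx \qquad \forall \varphi \in V.
\end{equation*}
Moreover, since the trace of $\overline{v}$ vanishes on $\Gamma_0$ and also on $\Gamma_1$ (as $\overline{v} \in V$), we have $\overline{v} \in H^1_0(\Om_\ell)$ and $\overline{v}$ is a nonnegative weak solution of $-D\Delta \overline{v} = f(\overline{v})$ with full Dirichlet data on $\partial\Om_\ell$. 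But Lemma \ref{lemma-2.2} asserts $\epsilon \varphi_1 \leq \overline{v}$, so $\overline{v}$ is strictly positive in the interior, in particular nontrivial.

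It remains to rule out such a nontrivial solution, and this is the one place where a genuine argument is needed rather than bookkeeping. The cleanest route is the standard eigenvalue/test-function trick: test the equation for $\overline{v}$ against $\varphi_1$ and test the eigenvalue equation \eqref{2.10} against $\overline{v}$, subtract, and use \eqref{2.13} — that $f(s)/s$ is strictly decreasing on $(0,+\infty)$ — together with $\overline{v} \ge \epsilon\varphi_1 > 0$ to derive a contradiction. Concretely, one obtains $\int_{\Om_\ell} \big(\tfrac{f(\overline v)}{\overline v} - D\lambda_1\big)\varphi_1 \overline v \, dx = 0$ while, by \eqref{2.11} and strict monotonicity of $f(s)/s$, on $\{\overline v < s_0\}$ one has $f(\overline v)/\overline v > f(s_0)/s_0 \ge D\lambda_1$, and the remaining estimate on $\{\overline v \ge s_0\}$ has to be handled by using that $\overline v$ being bounded (it lies in $(0,\tfrac{\mu}{\nu}m)$) makes $f(\overline v)/\overline v$ bounded below — one may need to invoke the eigenfunction decomposition more carefully here. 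The main obstacle is precisely making the sign of $\tfrac{f(\overline v)}{\overline v} - D\lambda_1$ work globally: \eqref{2.11} only gives it near $0$, so one should instead compare $\overline v$ with $\epsilon\varphi_1$ for the \emph{largest admissible} $\epsilon$ and exploit that the maximal solution, being a true positive solution of the Dirichlet problem $-D\Delta\overline v = f(\overline v)$, must by the strict concavity-type condition \eqref{2.13} be unique — and then a direct sub/supersolution sandwich between $\epsilon\varphi_1$ and the constant $\tfrac{\mu}{\nu}m$ with $\epsilon \to$ its sup value, or a Picone-inequality argument, closes the contradiction. I would write it with Picone's identity applied to the pair $(\overline v, \varphi_1)$, which handles the whole domain at once and only uses $\overline v > 0$, boundedness, and \eqref{2.13}.
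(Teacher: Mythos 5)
Your first two steps are fine: from $u\equiv 0$ and the second equation of \eqref{1.1} you correctly get $v(\cdot,0)=0$ on $\Gamma_0$, hence $v\in H^1_0(\Om_\ell)$, $v$ solves $-D\Delta v=f(v)$ with zero Dirichlet data, and $v\geq \epsilon\varphi_1>0$ by Lemma \ref{lemma-2.2}. But the final step — ``it remains to rule out such a nontrivial solution'' — is not a gap in the write-up, it is a wrong target: under \eqref{2.11} and \eqref{1.5} the Dirichlet problem $-D\Delta v=f(v)$, $v=0$ on $\partial\Om_\ell$ \emph{does} admit a positive solution (this is the classical logistic/KPP situation, e.g.\ $f(v)=v(1-v)$ with $D\lambda_1<f'(0)$), so no eigenvalue comparison, Picone identity, or sub/supersolution sandwich can produce a contradiction from the Dirichlet structure alone. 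Your own Picone computation shows the difficulty: it yields $\int_{\Om_\ell}\varphi_1^2\bigl(\lambda_1-\tfrac{f(v)}{Dv}\bigr)\,dx\geq 0$, and since $f(s)/s$ is small where $v$ is of order $1$, the integrand has no fixed sign and the argument cannot close.

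The information you discarded is exactly what makes the contradiction possible. Because the weak identity $\int_{\Om_\ell}D\nabla v\cdot\nabla\varphi\,dx=\int_{\Om_\ell}f(v)\varphi\,dx$ holds for \emph{all} $\varphi\in V$ — including test functions not vanishing on $\Gamma_0$ — the solution satisfies, in addition to $v=0$ on $\Gamma_0$, the natural condition $D\partial_n v=\mu u-\nu v=0$ on $\Gamma_0$: the boundary data are overdetermined there. The paper exploits this by taking a small ball $B$ centered at a point of $\Gamma_0$ and extending $v$ by $0$ into $B\setminus\Om_\ell$; the double vanishing makes the extension $\tilde v$ a weak solution of $-D\Delta\tilde v=f(\tilde v)$ in the whole ball. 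Elliptic regularity and the strong maximum principle then force $\tilde v>0$ throughout $B$ (since $\tilde v\geq 0$, $\tilde v\not\equiv 0$), contradicting $\tilde v\equiv 0$ on the lower half of $B$. Equivalently, Hopf's lemma applied on $\Gamma_0$ gives $\partial_n v<0$ there, contradicting the zero flux. To repair your proof you must bring the flux condition back into play; the Dirichlet problem by itself is consistent with everything you derived.
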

\begin{proof} Suppose that $u \equiv 0$. Due to the second equation of \eqref{1.1} one has $v(x_1,0)=0$ and from the first equation of \eqref{1.1} we get 
\begin{equation}\label{3.4}
\int_{\Om_\ell} D\nabla v\cdot \nabla \varphi ~dx  =  \int_{\Om_\ell} f(v)\varphi ~dx  ~~\forall \varphi \in V. 
\end{equation}
Consider then a small ball $B=B_{x_0}$ centered at $x_0\in \Gamma_0$. Set 
 \begin{equation*}
{\tilde v=  }
\begin{cases} v \text{ in } \Om_\ell \cap B, \cr
0 \text{ in the rest of the ball.}
\end{cases}
\end{equation*}
Let $\varphi \in { \cal D}(B)$. One has by \eqref{3.4}, 
\begin{equation*}
\int_{B} D\nabla {\tilde v}\cdot \nabla \varphi ~dx  =  \int_{\Om_\ell \cap B} D\nabla v\cdot \nabla \varphi ~dx= \int_{\Om_\ell\cap B} f(v)\varphi ~dx= \int_{ B} f({\tilde v})\varphi ~dx  ~~\forall \varphi \in  { \cal D}(B). 
\end{equation*}
Thus  
\begin{equation*}
-D\Delta {\tilde v} = f({\tilde v}) \text{ in } B. 
\end{equation*}
It is clear that ${\tilde v}$ and thus $f({\tilde v})$ are bounded and one has $f({\tilde v})\in L^\infty(B) \subset L^p(B)~ \forall p$. From the usual regularity theory it follows that 
$ {\tilde v} \in W^{2,p}(B) \subset C^{1,\alpha}(B)$. Since $f( {\tilde v}) \geq 0, f( {\tilde v}) \not \equiv 0$ it follows that ${\tilde v} > 0$ in $B$ (see \cite{GT}). Hence a contradiction. This shows the impossibility 
for $u$ to be identical to $0$ and this completes the proof of the proposition.
\end{proof}
\begin{remark} One can easily show (see \cite{DL}, \cite{C3}) that
\begin{equation*}
\lambda_1 = \lambda_1({\Om_\ell}) = \Big(\frac{\pi}{2\ell}\Big)^2 +  \Big(\frac{\pi}{L}\Big)^2.
\end{equation*}
Thus for a smooth function $f$ it is clear that \eqref{2.11} is satisfied if 
\begin{equation*}
\lambda_1 = \lambda_1({\Om_\ell}) <\frac{f'(0)}{D},
\end{equation*}
i.e. for $\ell$ and $L$ large enough.
\vskip .3 cm
Note that \eqref{2.13} (see also \eqref{1.5}) is satisfied in the case of the Fisher equation i.e. for 
\begin{equation*}
f(v) = v(1-v)
\end{equation*}
the Lipschitz character of $f$ being used only on a finite interval.
\end{remark}

\section{Some extension} 
We would like to show that our results extend in the case of a so called two road problem. More precisely  set 
\begin{equation*}
\Gamma'_0 = (-\ell, \ell)\times \{L\}, ~~\Gamma_1 = \partial \Om_\ell \backslash \{\Gamma_0 \cup \Gamma'_0\},
\end{equation*}
\begin{equation*}
V = \{v\in H^1( \Om_\ell) ~|~ v=0 \text{ on } \Gamma_1 \},
\end{equation*}
\begin{figure}[H]
	\centering
	\includegraphics[scale=1]{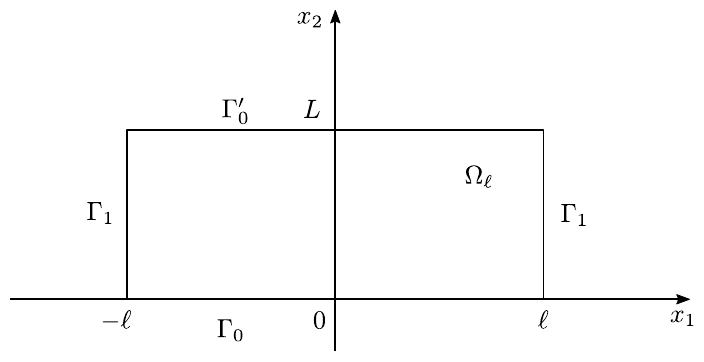}
	\caption{The domain $\Omega_\ell$ for two-road problem}
%	\label{two-road}	
\end{figure}

We consider the problem of finding a $(u,v,w)$ solution to 
\begin{equation}\label{4.1}
\begin{cases}
(u,v,w) \in H^1_0(\Gamma_0)\times V \times H^1_0(\Gamma'_0), \cr

\int_{\Om_\ell} D\nabla v\cdot \nabla \varphi ~dx +  \int_{\Gamma_0} \nu v(x_1,0)\varphi  ~ dx_1 +\int_{\Gamma'_0} \nu v(x_1,L)\varphi  ~ dx_1\cr 
~~~~~~~~~~~~~~~~~~~~~~~~~~~~~~~~~~=  \int_{\Om_\ell} f(v)\varphi ~dx + \int_{\Gamma_0} \mu u \varphi ~dx_1 + \int_{\Gamma'_0} \mu' w \varphi ~dx_1~~\forall \varphi \in V, \cr

\int_{\Gamma_0} D'u'\psi'  +\mu u \psi~dx_1 = \int_{\Gamma_0} \nu v(x_1,0)\psi ~dx_1 +  \int_{\Gamma_0} g(u) \psi ~dx_1 ~~\forall \psi  \in H^1_0(\Gamma_0),\cr
\int_{\Gamma'_0} D''w'\phi'  +\mu' w \phi~dx_1 = \int_{\Gamma'_0} \nu' v(x_1,L)\phi~dx_1 +  \int_{\Gamma'_0} h(w) \phi ~dx_1 ~~\forall \phi  \in H^1_0(\Gamma'_0).
\end{cases} 
\end{equation}

Here we assume that 
\begin{equation*} D, D', D'', \mu,\nu, \mu',\nu' ~~\text{are positive constants,}
\end{equation*}
$f,g,h$ are Lipschitz continuous functions with Lipschitz constants $L_f,L_g,L_h$ respectively (Cf. \eqref{1.3}), which implies that for $\lambda \geq L_f$,  $\eta \geq L_g$ and $\xi\ge L_h$ the functions 
\begin{equation*} 
x \to \lambda x - f(x),~ \eta x - g(x), ~\xi x-h(x)
\end{equation*}
are nondecreasing.  We will suppose that $f$ satisfies \eqref{1.5} and that  
\begin{equation*}\label{4.2}
g(0)=0, ~~h(0)=0.
\end{equation*}
Since ${\Gamma_0}$ and ${\Gamma'_0}$ are playing exactly identical roles there is no loss of generality in assuming for instance
\begin{equation*}\label{4.3}
\frac{\mu}{\nu}  \geq \frac{\mu'}{\nu'}.
\end{equation*}
Then for 
\begin{equation*}\label{4.4}
m \geq  \frac{\nu}{\mu} , ~~m'=  \frac{\nu'}{\mu'} \frac{\mu}{\nu} m,
\end{equation*}
 we will assume 
\begin{equation}\label{4.7}
g(m)\leq 0, ~~h(m')\leq 0.
\end{equation}
One should notice the following properties 
\begin{equation*}\label{4.5}
 \frac{\mu}{\nu} m  \geq 1,~~  \frac{\mu'}{\nu'} m'  =\frac{\mu}{\nu} m.
\end{equation*}
\begin{equation*}\label{4.6}
m' =  \frac{\nu'}{\mu'} \frac{\mu}{\nu} m \geq  \frac{\nu'}{\mu'}.
\end{equation*}
Then with small variants we can reproduce the results we had in the preceding sections. First we have

\begin{lemma} Suppose that $\widetilde{u}, \widetilde w $ are measurable functions on ${\Gamma_0}$ and $\Gamma'_0$ respectively such that 
\begin{equation*}\label{4.8}
0\leq \widetilde{u} \leq m,~~ 0\leq \widetilde{w} \leq m'. \end{equation*}
Then under the assumptions above  the problem 
\begin{equation}\label{4.9}
\begin{cases}
v\in V, \cr
\int_{\Om_\ell} D\nabla v\cdot \nabla \varphi ~dx +\int_{\Gamma_0} \nu  v(x_1,0)\varphi (x_1,0) ~ dx_1 + \int_{\Gamma'_0} \nu'  v(x_1,L)\varphi (x_1,L) ~ dx_1\cr 
\qquad\qquad\qquad=  \int_{\Om_\ell} f(v)\varphi ~dx + \int_{\Gamma_0} \mu \widetilde u \varphi(x_1,0) ~dx_1+ \int_{\Gamma'_0} \mu' \widetilde w \varphi(x_1,L) ~dx_1 ~~\forall \varphi \in V, \cr
\end{cases} 
\end{equation}
possesses a minimal and a maximal solution with values in $(0,\frac{\mu}{\nu}m)$.
\end{lemma}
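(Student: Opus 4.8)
The plan is to follow the proof of Lemma~\ref{lemma-2.1} almost line by line, the only new ingredient being the bookkeeping of the second boundary term on $\Gamma'_0$, which is kept under control precisely by the definition $m' = \frac{\nu'}{\mu'}\frac{\mu}{\nu}m$.

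First I would show that any nonnegative solution $v$ to \eqref{4.9} satisfies $v \leq k := \frac{\mu}{\nu}m$. Taking $\varphi = (v-k)^+ \in V$ and using $k \geq 1$ so that $f(v) \leq 0$ on $\{v \geq k\}$, one is left with
\begin{equation*}
\int_{\Om_\ell} D|\nabla (v-k)^+|^2\,dx \leq \int_{\Gamma_0}\{\mu\widetilde u - \nu v\}(v-k)^+\,dx_1 + \int_{\Gamma'_0}\{\mu'\widetilde w - \nu' v\}(v-k)^+\,dx_1.
\end{equation*}
On $\{v \geq k\}$ one has on $\Gamma_0$ that $\mu\widetilde u - \nu v \leq \mu m - \nu k = 0$, and on $\Gamma'_0$ that $\mu'\widetilde w - \nu' v \leq \mu' m' - \nu' k = \nu'\frac{\mu}{\nu}m - \nu'\frac{\mu}{\nu}m = 0$; hence $(v-k)^+ = 0$.

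Next I would check that $0$ is a subsolution to \eqref{4.9} (immediate, since $f(0) = 0$ and $\mu\widetilde u, \mu'\widetilde w \geq 0$) and that the constant $k = \frac{\mu}{\nu}m$ is a supersolution: for $\varphi \in V$, $\varphi \geq 0$, one has $f(k)\varphi \leq 0$, while $\nu k \geq \mu\widetilde u$ on $\Gamma_0$ (because $\nu k = \mu m \geq \mu\widetilde u$) and $\nu' k \geq \mu'\widetilde w$ on $\Gamma'_0$ (because $\nu' k = \nu'\frac{\mu}{\nu}m = \mu' m' \geq \mu'\widetilde w$). Then, exactly as in \eqref{2.5}, for $z \in L^2(\Om_\ell)$ I would introduce $y = S(z) \in V$ as the unique (Lax--Milgram) solution of the problem obtained from \eqref{4.9} by replacing $f(v)$ by $f(z) + \lambda z$ and adding $\int_{\Om_\ell}\lambda y\varphi\,dx$ to the left-hand side, with $\lambda \geq L_f$, now carrying both boundary integrals. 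The continuity of $S\colon L^2(\Om_\ell) \to L^2(\Om_\ell)$ and its monotonicity follow as in Lemma~\ref{lemma-2.1}: the extra boundary term on $\Gamma'_0$ has a favourable sign and only helps in the energy estimates obtained by testing with $y-y'$ and with $-(y-y')^-$ respectively.

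Finally, I would run the monotone iteration $\underline{y_0} = 0$, $\overline{y_0} = k$, $\underline{y_n} = S(\underline{y_{n-1}})$, $\overline{y_n} = S(\overline{y_{n-1}})$, and establish
\begin{equation*}
\underline{y_0} \leq \underline{y_1} \leq \cdots \leq \underline{y_n} \leq \overline{y_n} \leq \cdots \leq \overline{y_1} \leq \overline{y_0} = k
\end{equation*}
by the same subsolution/supersolution comparison combined with the monotonicity of $S$. Monotone convergence in $L^2(\Om_\ell)$ then produces limits $\underline v, \overline v$ which are fixed points of $S$, hence solutions of \eqref{4.9} with values in $(0, \frac{\mu}{\nu}m)$, and they are the minimal and maximal ones because any solution lying between $0$ and $k$ is squeezed between the two sequences after applying $S^n$. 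The only genuinely new point, and the one to watch, is the cancellation $\mu' m' - \nu' k = 0$ in the first step together with $\mu' m' \geq \mu'\widetilde w$ in the supersolution step; both are exactly what the choice $m' = \frac{\nu'}{\mu'}\frac{\mu}{\nu}m$ delivers, so the presence of the second road creates no real obstacle.
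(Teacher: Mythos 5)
Your proposal is correct and follows essentially the same route as the paper: the same $(v-k)^+$ truncation with $k=\frac{\mu}{\nu}m=\frac{\mu'}{\nu'}m'$, the same sub/supersolution pair $0$ and $k$, the same shifted Lax--Milgram operator $S$ with $\lambda\geq L_f$, and the same monotone iteration. The cancellation $\mu'm'-\nu'k=0$ that you single out is exactly the observation the paper relies on, so there is nothing to add.
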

\begin{proof} Let us remark first that any nonnegative solution to \eqref{4.9} takes its values in $(0, \frac{\mu}{\nu}m)$. Indeed if $v$ is solution to \eqref{4.9} taking as test function $\varphi = (v-k)^+$, $k=\frac{\mu}{\nu}m \geq 1$ one gets 
\begin{equation*}
\begin{aligned}
&\int_{\Om_\ell} D|\nabla (v-k)^+|^2 ~dx = \int_{\Om_\ell} D\nabla (v-k)\cdot \nabla (v-k)^+~dx = \int_{\Om_\ell} D\nabla v\cdot \nabla (v-k)^+~dx =\cr
&   \int_{\Om_\ell} f(v)(v-k)^+~dx + \int_{\Gamma_0} \{\mu \widetilde u - \nu v(x_1,0)\}(v-k)^+ ~ dx_1+ \int_{\Gamma'_0} \{\mu' \widetilde w - \nu' v(x_1,L)\}(v-k)^+ ~ dx_1 \cr
 & \leq  \int_{\Gamma_0} \{\mu \widetilde u - \nu v(x_1,0) \}(v-k)^+ ~ dx_1+\int_{\Gamma'_0} \{\mu' \widetilde w - \nu' v(x_1,L)\}(v-k)^+ ~ dx_1 \leq 0, \cr
\end{aligned}
\end{equation*}
since on the set where $v \geq k = \frac{\mu}{\nu}m = \frac{\mu'}{\nu'}m'$ one has  $\{\mu \widetilde u - \nu v(x_1,0)\}\leq \{\mu \widetilde u - \mu m\} \leq0$ and 
$\{\mu'\widetilde w - \nu' v(x_1,L)\}\leq \{\mu' \widetilde w - \mu' m'\} \leq0$. 

\noindent Next let us note that $0$ is a subsolution to \eqref{4.9}. Indeed $\forall \varphi \in V,  \varphi  \geq 0$, one has
\begin{equation*}\label{4.10}
\begin{aligned}
\int_{\Om_\ell} D\nabla 0\cdot \nabla \varphi ~dx +  \int_{\Gamma_0} \nu 0\varphi ~ dx_1&+  \int_{\Gamma'_0} \nu' 0\varphi ~ dx_1 \cr 
&\leq  \int_{\Om_\ell} f(0)\varphi ~dx + \int_{\Gamma_0} \mu \widetilde u \varphi ~dx_1 + \int_{\Gamma'_0} \mu' \widetilde w \varphi ~dx_1.
\end{aligned}
\end{equation*}
On the other hand,  $k= \frac{\mu}{\nu}m$ is a supersolution since for $\varphi \in V$, $\varphi \geq 0$, 
\begin{equation*}
\int_{\Om_\ell} D \nabla k\cdot \nabla \varphi ~dx + \int_{\Gamma_0} \nu k \varphi ~dx_1+ \int_{\Gamma'_0} \nu' k \varphi ~dx_1 \geq \int_{\Om_\ell} f( k) \varphi ~dx + \int_{\Gamma_0}\mu \widetilde u \varphi ~dx_1 + \int_{\Gamma'_0}\mu' \widetilde w \varphi ~dx_1.
\end{equation*}
\vskip .3 cm
For $z\in L^2(\Om_\ell)$ we denote by $y=S(z)$ the solution to 
\begin{equation*}\label{4.11}
\begin{cases}
y\in V, \cr
\int_{\Om_\ell} D\nabla y\cdot \nabla \varphi ~dx + \int_{\Om_\ell} \lambda y \varphi ~dx + \int_{\Gamma_0} \nu y(x_1,0)\varphi  ~ dx_1 +\int_{\Gamma'_0} \nu' y(x_1,L)\varphi  ~ dx_1  \cr \qquad\qquad\qquad=  \int_{\Om_\ell} f(z)\varphi ~dx + \int_{\Om_\ell} \lambda z \varphi ~dx  +\int_{\Gamma_0} \mu \widetilde u \varphi ~dx_1 +\int_{\Gamma'_0} \mu' \widetilde w \varphi ~dx_1 ~~\forall \varphi \in V, \cr
\end{cases} 
\end{equation*}
where $\lambda \geq L_f$. The existence of a unique solution $y$ to the problem above follows from the Lax-Milgram theorem. 
Then reproducing the arguments of Lemma \ref{lemma-2.1} it is easy to show that $S$ is continuous and monotone. Introducing the sequence defined in  \eqref{2.7} one concludes as in the Lemma \ref{lemma-2.1} to the existence 
of a minimal and maximal solution $ \underline{v}$ and $ \overline{v}$.
\end{proof}
Then one has :
\begin{lemma} Under the assumptions of the preceding lemma and \eqref{2.11}, for $\epsilon>0$ small enough, the maximal solution $\overline v$ to \eqref{4.9} satisfies 
\begin{equation*}\label{4.12}
\epsilon \varphi_1  \leq \overline v.
\end{equation*}
 In particular $\overline v$ is bounded away from $0$.
\end{lemma}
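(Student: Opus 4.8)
The plan is to transcribe the proof of Lemma \ref{lemma-2.2} almost word for word, the only new point being that $\partial\Om_\ell$ now carries two road segments $\Gamma_0$ and $\Gamma'_0$, both of which lie in the zero set of the first Dirichlet eigenfunction $\varphi_1$ of $\Om_\ell$, so that the two trace terms involving $\varphi_1$ in \eqref{4.9} vanish simultaneously.

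First I would pick $\epsilon>0$ small enough that, by \eqref{2.11}, the pointwise inequality $D\lambda_1\epsilon\varphi_1\le f(\epsilon\varphi_1)$ holds on $\Om_\ell$; this is possible because $\varphi_1\in C(\overline{\Om_\ell})$, so $\epsilon\varphi_1$ can be made uniformly small. Shrinking $\epsilon$ further if necessary I also arrange $\epsilon\varphi_1\le\frac{\mu}{\nu}m$, so that $\epsilon\varphi_1$ sits below the constant supersolution $k=\frac{\mu}{\nu}m$ of the preceding lemma.

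Next I would check that $\epsilon\varphi_1$ is a subsolution of \eqref{4.9}. Let $\varphi\in V$, $\varphi\ge0$. Since $\varphi_1=0$ on $\Gamma_0\cup\Gamma'_0$, the terms $\int_{\Gamma_0}\nu\epsilon\varphi_1(x_1,0)\varphi\,dx_1$ and $\int_{\Gamma'_0}\nu'\epsilon\varphi_1(x_1,L)\varphi\,dx_1$ vanish, and integration by parts together with \eqref{2.10} gives $\int_{\Om_\ell}D\nabla(\epsilon\varphi_1)\cdot\nabla\varphi\,dx=\int_{\partial\Om_\ell}D\,\partial_n(\epsilon\varphi_1)\varphi\,d\sigma+\int_{\Om_\ell}D\lambda_1(\epsilon\varphi_1)\varphi\,dx$. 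Using $\partial_n(\epsilon\varphi_1)\le0$ on $\partial\Om_\ell$ and $D\lambda_1\epsilon\varphi_1\le f(\epsilon\varphi_1)$, the right-hand side is at most $\int_{\Om_\ell}f(\epsilon\varphi_1)\varphi\,dx$, which is in turn $\le\int_{\Om_\ell}f(\epsilon\varphi_1)\varphi\,dx+\int_{\Gamma_0}\mu\widetilde u\varphi\,dx_1+\int_{\Gamma'_0}\mu'\widetilde w\varphi\,dx_1$; this is exactly the subsolution inequality for \eqref{4.9}.

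Finally I would run the monotone iteration of the preceding lemma with the modified lower datum: $\underline{y_0}=\epsilon\varphi_1$, $\overline{y_0}=\frac{\mu}{\nu}m$, and $\underline{y_n}=S(\underline{y_{n-1}})$, $\overline{y_n}=S(\overline{y_{n-1}})$ for $n\ge1$. The subsolution property of $\epsilon\varphi_1$ and the monotonicity of $S$ give, just as in Lemmas \ref{lemma-2.1} and \ref{lemma-2.2},
\[
\epsilon\varphi_1=\underline{y_0}\le\underline{y_1}\le\cdots\le\underline{y_n}\le\overline{y_n}\le\cdots\le\overline{y_1}\le\overline{y_0}=\frac{\mu}{\nu}m,
\]
together with $\overline{y_n}\to\overline v$ in $L^2(\Om_\ell)$, where $\overline v$ is the maximal solution to \eqref{4.9}. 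Passing to the limit yields $\epsilon\varphi_1\le\overline v$, hence $\overline v$ is bounded away from $0$. I do not anticipate any genuine difficulty; the whole argument is a transcription of the one-road case, and the only thing to watch is that the extra road $\Gamma'_0$, like $\Gamma_0$, lies on $\partial\Om_\ell$, so the corresponding trace term of $\varphi_1$ also disappears.
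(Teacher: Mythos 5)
Your proposal is correct and follows essentially the same route as the paper: show that $\epsilon\varphi_1$ is a subsolution of \eqref{4.9} using \eqref{2.11}, the vanishing of $\varphi_1$ on $\Gamma_0\cup\Gamma'_0\subset\partial\Om_\ell$, integration by parts with $\partial_n(\epsilon\varphi_1)\le0$, and then run the monotone iteration of Lemma \ref{lemma-2.1} with $\underline{y_0}=\epsilon\varphi_1$ and $\overline{y_0}=\frac{\mu}{\nu}m$, passing to the limit $\overline{y_n}\to\overline v$. No issues.
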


\begin{proof}
Due to \eqref{2.11} one has for $\epsilon>0$ small enough
\begin{equation*}
D\lambda_1 \epsilon \varphi_1 \leq f( \epsilon \varphi_1).
\end{equation*}
Then  for $\varphi \in V$, $\varphi \geq 0$ it holds after integration by parts
\begin{equation*}
\begin{aligned}
\int_{\Om_\ell} D \nabla( \epsilon \varphi_1)\cdot \nabla \varphi ~dx &+ \int_{\Gamma_0} \nu \epsilon \varphi_1(x_1,0) \varphi ~dx_1 + \int_{\Gamma'_0} \nu' \epsilon \varphi_1(x_1,L) \varphi ~dx_1 \cr
&=   \int_{\Om_\ell} D \nabla( \epsilon \varphi_1)\cdot \nabla \varphi ~dx =  \int_{\Om_\ell}  \nabla \cdot (D \nabla (\epsilon \varphi_1) \varphi )- D\Delta (\epsilon \varphi_1) \varphi  ~dx\cr
 &  = \int_{\partial \Om_\ell}D\partial_{n} (\epsilon \varphi_1)  \varphi d\sigma +  \int_{\Om_\ell}  D \lambda_1 (\epsilon \varphi_1) \varphi  ~dx\cr
 &\leq  \int_{\Om_\ell}  f(\epsilon \varphi_1) \varphi  ~dx + \int_{\Gamma_0} \mu {\tilde u} \varphi ~dx_1 +  \int_{\Gamma'_0} \mu' {\tilde w} \varphi ~dx_1. \cr
\end{aligned}
\end{equation*}
($n$ denotes the outward unit normal to $\Om_\ell$, note that $\partial_{n} (\epsilon \varphi_1) \leq 0$). Thus $\epsilon \varphi_1$ is a positive subsolution to \eqref{4.9} and one concludes as in the proof of Lemma \ref{lemma-2.2}.
\end{proof}

Analogous to Lemma \ref{lemma-2.3} one has : 

\begin{lemma}\label{lemma-4.3} Suppose that $f$ satisfies \eqref{2.13}.
If $v_1, v_2$ are positive solutions to \eqref{4.9} corresponding to $(u_1,w_1)$ and $(u_2,w_2)$ respectively then 
\begin{equation*}\label{4.13}
u_1\leq u_2 \text{ and } w_1 \leq w_2 \text{ implies }  v_1 \leq v_2.
\end{equation*}
In particular \eqref{4.9} has a unique positive solution. 
\end{lemma}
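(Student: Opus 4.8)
\textbf{Proof proposal for Lemma~\ref{lemma-4.3}.}
The plan is to mimic the argument of Lemma~\ref{lemma-2.3} almost verbatim, the only new feature being the presence of two boundary interaction terms instead of one. First I would fix the cutoff function $\theta$ and set $\theta_\epsilon(t) = \theta(t/\epsilon)$ exactly as before, noting that $v_1\theta_\epsilon(v_1-v_2)$ and $v_2\theta_\epsilon(v_1-v_2)$ lie in $V$ (both $v_i$ are bounded by $\frac{\mu}{\nu}m$). Writing the weak formulation \eqref{4.9} for $v_1$ tested against $v_2\theta_\epsilon$ and for $v_2$ tested against $v_1\theta_\epsilon$, and subtracting, I would obtain an identity whose boundary contributions are
\begin{equation*}
\int_{\Gamma_0} \mu(u_2 v_1 - u_1 v_2)\theta_\epsilon(v_1-v_2)\,dx_1 + \int_{\Gamma'_0} \mu'(w_2 v_1 - w_1 v_2)\theta_\epsilon(v_1-v_2)\,dx_1.
\end{equation*}
Here is the one place where I must check something genuinely: on the set where $\theta_\epsilon(v_1-v_2) > 0$ one has essentially $v_1 > v_2 \geq 0$, hence $u_2 v_1 - u_1 v_2 \geq u_1 v_1 - u_1 v_2 = u_1(v_1-v_2) \geq 0$ using $u_2 \geq u_1 \geq 0$, and likewise $w_2 v_1 - w_1 v_2 \geq 0$ using $w_2 \geq w_1 \geq 0$. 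So both boundary integrals are nonnegative, just as the single one was in Lemma~\ref{lemma-2.3}.

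Having discarded the (nonnegative) boundary terms, the identity becomes the inequality
\begin{equation*}
\int_{\Om_\ell} f(v_2)(v_1\theta_\epsilon) - f(v_1)(v_2\theta_\epsilon)\,dx \leq \int_{\Om_\ell} D\{\nabla v_2\cdot\nabla(v_1\theta_\epsilon) - \nabla v_1\cdot\nabla(v_2\theta_\epsilon)\}\,dx,
\end{equation*}
which is \emph{formally identical} to the inequality reached in the proof of Lemma~\ref{lemma-2.3}. From this point the argument is literally the same: expand the gradient terms, use $\theta'_\epsilon \geq 0$ and $v_2 \geq 0$ to drop the $-\int D v_2|\nabla(v_1-v_2)|^2\theta'_\epsilon$ term, introduce $\gamma_\epsilon(t) = \int_0^t s\theta'_\epsilon(s)\,ds$ so that the right-hand side is $\int_{\Om_\ell} D\nabla v_2\cdot\nabla\gamma_\epsilon(v_1-v_2)\,dx$, and then test the equation \eqref{4.9} for $v_2$ against $\gamma_\epsilon(v_1-v_2) \in V$. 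This last step again produces boundary terms $\int_{\Gamma_0}\mu u_2\gamma_\epsilon(v_1-v_2) + \int_{\Gamma'_0}\mu' w_2\gamma_\epsilon(v_1-v_2)$, all nonnegative since $\gamma_\epsilon \geq 0$ and $u_2, w_2 \geq 0$, so they can be kept on the right and are controlled by $\|\gamma_\epsilon\|_\infty \leq C\epsilon \to 0$.

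Letting $\epsilon \to 0$ gives $\theta_\epsilon(v_1-v_2) \to \chi_{\{v_1>v_2\}}$ and both the $f(v_2)\gamma_\epsilon$ and boundary terms vanish, so one arrives at
\begin{equation*}
\int_{\{v_1>v_2\}} f(v_2)v_1 - f(v_1)v_2\,dx \leq 0.
\end{equation*}
Assumption \eqref{2.13}, that $f(s)/s$ is decreasing, forces the integrand to be strictly positive wherever $0 < v_2 < v_1$; hence $|\{v_1 > v_2\}| = 0$, i.e. $v_1 \leq v_2$. For the uniqueness claim, if $v_1, v_2$ are two positive solutions of \eqref{4.9} for the same data $(\widetilde u, \widetilde w)$, applying the comparison with the roles of $v_1, v_2$ interchanged (the hypothesis $u_1 \leq u_2$, $w_1 \leq w_2$ holds trivially with equality in both directions) yields $v_1 \leq v_2$ and $v_2 \leq v_1$. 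I do not anticipate a real obstacle: the only thing to verify carefully is the sign of the two boundary integrals, and as noted this follows from $0 \leq u_1 \leq u_2$, $0 \leq w_1 \leq w_2$ together with $v_1 > v_2 \geq 0$ on the relevant set — structurally the same mechanism as in Lemma~\ref{lemma-2.3}, merely duplicated for the second road.
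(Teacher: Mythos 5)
Your proposal is correct and follows essentially the same route as the paper: the paper's own proof likewise verifies that the two boundary integrals $\int_{\Gamma_0}\mu(u_2v_1-u_1v_2)\theta_\epsilon$ and $\int_{\Gamma'_0}\mu'(w_2v_1-w_1v_2)\theta_\epsilon$ are nonnegative and then reduces everything to the argument of Lemma~\ref{lemma-2.3}. Your explicit sign check $u_2v_1-u_1v_2\geq u_1(v_1-v_2)\geq 0$ on the support of $\theta_\epsilon(v_1-v_2)$ is exactly the point the paper dismisses as ``clearly'' nonnegative.
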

\begin{proof} Denote by $\theta$ a smooth function such that 
\begin{equation*}
\theta(t) =0 ~~\forall  t\leq 0,~~ \theta(t) =1 ~~ \forall  t\geq 1, ~~\theta'(t) \geq 0.
\end{equation*}
Set $\theta_\epsilon(t)= \theta(\frac{t}{\epsilon})$. Clearly 
\begin{equation*}
v_1\theta_\epsilon(v_1-v_2),~v_2\theta_\epsilon(v_1-v_2) \in V.
\end{equation*}

From the equations satisfied by $v_1,v_2$ one gets, setting $\theta_\epsilon =\theta_\epsilon(v_1-v_2) $,
\begin{align*}
&\int_{\Om_\ell} D\nabla v_1\cdot \nabla (v_2\theta_\epsilon) ~dx +  \int_{\Gamma_0} \nu v_1(x_1,0)(v_2\theta_\epsilon) ~ dx_1+\int_{\Gamma'_0} \nu' v_1(x_1,L)(v_2\theta_\epsilon) ~ dx_1\cr
&\qquad\qquad\qquad\qquad\qquad =  \int_{\Om_\ell} f(v_1)(v_2\theta_\epsilon) ~dx + \int_{\Gamma_0} \mu u_1(v_2\theta_\epsilon) ~dx_1+ \int_{\Gamma'_0} \mu' w_1(v_2\theta_\epsilon) ~dx_1,
\end{align*}
\begin{align*}
&\int_{\Om_\ell} D\nabla v_2\cdot \nabla (v_1\theta_\epsilon) ~dx +  \int_{\Gamma_0} \nu v_2(x_1,0)(v_1\theta_\epsilon) ~ dx_1+\int_{\Gamma'_0} \nu' v_2(x_1,L)(v_1\theta_\epsilon) ~ dx_1 \cr
&\qquad\qquad\qquad\qquad\qquad =  \int_{\Om_\ell} f(v_2)(v_1\theta_\epsilon) ~dx + \int_{\Gamma_0} \mu u_2(v_1\theta_\epsilon) ~dx_1+ \int_{\Gamma'_0} \mu' w_2(v_1\theta_\epsilon) ~dx_1.
\end{align*}
By subtraction we obtain
\begin{equation*}
\begin{aligned}
&\int_{\Om_\ell} D\{\nabla v_2\cdot \nabla (v_1\theta_\epsilon) -\nabla v_1\cdot \nabla (v_2\theta_\epsilon)\}~dx  =  \int_{\Om_\ell} f(v_2)(v_1\theta_\epsilon)- f(v_1)(v_2\theta_\epsilon) ~dx\cr 
&\qquad\qquad\qquad\qquad + \int_{\Gamma_0} \mu (u_2v_1-u_1v_2)\theta_\epsilon (v_1-v_2)~dx_1+ \int_{\Gamma'_0} \mu' (w_2v_1-w_1v_2)\theta_\epsilon (v_1-v_2)~dx_1.
\end{aligned}
\end{equation*}
Clearly the last two integrals above are nonnegative so that one has 
\begin{equation*}
\int_{\Om_\ell} D\{\nabla v_2\cdot \nabla (v_1\theta_\epsilon) -\nabla v_1\cdot \nabla (v_2\theta_\epsilon)\}~dx  \geq  \int_{\Om_\ell} f(v_2)(v_1\theta_\epsilon)- f(v_1)(v_2\theta_\epsilon) ~dx .
 \end{equation*}
Then the rest of the proof is like in Lemma \ref{lemma-2.3}.
\end{proof}

Then we can now show :
\begin{theorem}
Under the assumptions above the problem \eqref{4.1} admits a nontrivial solution.
\end{theorem}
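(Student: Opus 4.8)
The plan is to reproduce, with minor variants, the fixed–point scheme of Theorem \ref{thm-3.1}, now working with the pair of road densities $(u,w)$. Set
\[
K = \{(u,w)\in L^2(\Gamma_0)\times L^2(\Gamma'_0) ~|~ 0\le u\le m,\ 0\le w\le m'\},
\]
a closed, bounded, convex subset of $L^2(\Gamma_0)\times L^2(\Gamma'_0)$. For $(u,w)\in K$, let $\overline v$ be the unique positive solution to \eqref{4.9} with $\widetilde u=u$, $\widetilde w=w$: existence comes from the first lemma of this section, uniqueness from Lemma \ref{lemma-4.3}. Fixing $\eta\ge L_g$ and $\xi\ge L_h$, define $T(u,w)=(U,W)$, where $U\in H^1_0(\Gamma_0)$ and $W\in H^1_0(\Gamma'_0)$ are the Lax--Milgram solutions of
\[
\int_{\Gamma_0} D'U'\psi' + (\mu+\eta)U\psi ~dx_1 = \int_{\Gamma_0} \nu\,\overline v(x_1,0)\psi + g(u)\psi + \eta u\psi ~dx_1 \quad \forall \psi\in H^1_0(\Gamma_0),
\]
\[
\int_{\Gamma'_0} D''W'\phi' + (\mu'+\xi)W\phi ~dx_1 = \int_{\Gamma'_0} \nu'\,\overline v(x_1,L)\phi + h(w)\phi + \xi w\phi ~dx_1 \quad \forall \phi\in H^1_0(\Gamma'_0).
\]
A fixed point $(u,w)$ of $T$ together with the associated $\overline v$ gives a solution $(u,\overline v,w)$ of \eqref{4.1}.

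Next one checks, exactly as in Theorem \ref{thm-3.1}, that $T$ is continuous and monotone on $K$. For continuity, if $(u_n,w_n)\to(u,w)$ in $K$, testing \eqref{4.9} against $\overline v_n$ bounds $\overline v_n$ in $H^1(\Om_\ell)$ uniformly; extracting a weakly convergent subsequence and using the compactness of the traces $H^1(\Om_\ell)\to L^2(\Gamma_0)$ and $H^1(\Om_\ell)\to L^2(\Gamma'_0)$, one passes to the limit in \eqref{4.9} and identifies the limit with $\overline v$ by the uniqueness of Lemma \ref{lemma-4.3}; convergence of the full sequence and of both traces follows, and passing to the limit in the equations for $U_n,W_n$ gives $T(u_n,w_n)\to T(u,w)$. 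For monotonicity, if $u_1\ge u_2$ and $w_1\ge w_2$ then $\overline v_1\ge\overline v_2$ by Lemma \ref{lemma-4.3}, hence also on $\Gamma_0$ and on $\Gamma'_0$; testing the difference equations against $-(U_1-U_2)^-$ and $-(W_1-W_2)^-$, and using $g(u_1)-g(u_2)+\eta(u_1-u_2)\ge0$ and $h(w_1)-h(w_2)+\xi(w_1-w_2)\ge0$ (Cf. \eqref{1.3}--\eqref{1.4}), one gets $(U_1-U_2)^-=(W_1-W_2)^-=0$.

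The step where the bookkeeping of the constants matters is that $T$ maps $K$ into itself. Since $g(0)=h(0)=0$ and $\overline v>0$, testing the equations for $T(0,0)$ against $-U_0^-$ and $-W_0^-$ yields $T(0,0)\ge(0,0)$. For $T(m,m')=(U_m,W_{m'})$, test against $(U_m-m)^+$ and $(W_{m'}-m')^+$: by \eqref{4.7} one has $g(m)\le0$ and $h(m')\le0$, while the maximal-solution bound of the first lemma gives $\overline v\le\frac{\mu}{\nu}m$, so $\nu\overline v-\mu m\le0$ on $\Gamma_0$; and since $\frac{\mu'}{\nu'}m'=\frac{\mu}{\nu}m$ one also has $\nu'\overline v-\mu' m'=\nu'\big(\overline v-\tfrac{\mu}{\nu}m\big)\le0$ on $\Gamma'_0$. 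Hence $U_m\le m$, $W_{m'}\le m'$, and by monotonicity $T(K)\subset K$. The energy estimates bound $T(K)$ in $H^1_0(\Gamma_0)\times H^1_0(\Gamma'_0)$, hence in $C^{1/2}(\Gamma_0)\times C^{1/2}(\Gamma'_0)$, which is relatively compact in $L^2(\Gamma_0)\times L^2(\Gamma'_0)$. Schauder's fixed point theorem then furnishes $(u,w)\in K$ with $T(u,w)=(u,w)$, and the corresponding $(u,\overline v,w)$ solves \eqref{4.1}. Nontriviality follows from the second lemma of this section: under \eqref{2.11} one has $\overline v\ge\epsilon\varphi_1>0$, so $v\not\equiv0$.

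The only point requiring real care is the continuity of $T$ — namely the passage to the limit in \eqref{4.9} and the identification of the weak limit of $\overline v_n$ through Lemma \ref{lemma-4.3}, which forces convergence of the whole sequence and hence of both traces. Everything else is a routine transcription of Section 3, with the two-road asymmetry absorbed into the choice $m'=\frac{\nu'}{\mu'}\frac{\mu}{\nu}m$.
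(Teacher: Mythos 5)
Your proposal is correct and follows essentially the same route as the paper: the same convex set $K\times K'$, the same Lax--Milgram definition of $T$ with the $\eta$ and $\xi$ shifts, the same continuity/monotonicity/invariance verifications (including the key identity $\frac{\mu'}{\nu'}m'=\frac{\mu}{\nu}m$ for the sub-invariance at $(m,m')$), and the same compactness-plus-Schauder conclusion with nontriviality supplied by the lower bound $\overline v\ge\epsilon\varphi_1$.
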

\begin{proof} It is of course clear that $(0,0,0)$ is solution to \eqref{4.1}. Set 
 \begin{equation*}
K = \{u \in L^2({\Gamma_0}) ~|~ 0\leq u\leq m\},~~K' = \{w \in L^2({\Gamma'_0}) ~|~ 0\leq w \leq m'\}.
 \end{equation*}
 For $(u,w) \in K\times K'$, let $\overline v$ be the unique positive solution to \eqref{4.9} associated to $(\widetilde u,\widetilde w)=(u,w)$. For $\eta \geq L_g$, $\xi\geq L_h$, let $(U,W)=T(u,w)$ be the solution to 
 \begin{equation*}\label{4.14}
\begin{cases}
(U,W)\in H^1_0(\Gamma_0)\times H^1_0(\Gamma'_0), \cr
\int_{\Gamma_0} D'U'\psi'  +\mu U \psi + \eta U \psi~dx_1 = \int_{\Gamma_0} \nu \overline{v}(x_1,0)\psi +  g(u) \psi + \eta u \psi~dx_1  ~\forall \psi  \in H^1_0(\Gamma_0), \cr
\int_{\Gamma'_0} D''W'\phi'  +\mu' W \phi + \xi W \phi~dx_1 \cr
~~~~~~~~~~~~~~~~~~~~~~~~~~~~~~~~~~~~~~~= \int_{\Gamma'_0} \nu' \overline{v}(x_1,L)\phi +  h(w) \phi + \xi w \phi~dx_1  ~\forall \phi  \in H^1_0(\Gamma'_0). \cr
\end{cases} 
\end{equation*}
The existence of $(U,W)$ is a consequence of the Lax-Milgram theorem. 
\vskip .3 cm
We show as in Theorem \ref{thm-3.1}  that $T$ is continuous on $K\times K'\subset L^2({\Gamma_0})\times L^2({\Gamma'_0})$. Indeed suppose that $u_n \to u$ in $K$ and $w_n \to w$ in $K'$. Denote by $\overline{v}_n$ the solution to \eqref{4.9} associated to $(u_n,w_n)$ i.e. satisfying 
 \begin{align}\label{4.15}
&\int_{\Om_\ell} D\nabla\overline{v}_n\cdot \nabla \varphi ~dx +  \int_{\Gamma_0} \nu\overline{v}_n(x_1,0)\varphi  ~ dx_1 +\int_{\Gamma'_0}\nu'\overline{v}_n(x_1,L)\varphi  ~ dx_1\cr
&\qquad\qquad\qquad\qquad=  \int_{\Om_\ell} f(\overline{v}_n)\varphi ~dx + \int_{\Gamma_0} \mu u_n \varphi ~dx_1 + \int_{\Gamma'_0} \mu' w_n \varphi ~dx_1 ~~\forall \varphi \in V.
\end{align}
Since $\overline{v}_n$, $u_n$ and $w_n$ are bounded, taking $\varphi= \overline{v}_n$ in the equality above one gets easily 
\begin{equation*}
\int_{\Om_\ell} D|\nabla\overline{v}_n|^2~dx +  \int_{\Gamma_0} \nu\overline{v}_n(x_1,0)^2  ~ dx_1+\int_{\Gamma'_0}\nu'\overline{v}_n(x_1,L)^2  ~ dx_1 \leq C,
\end{equation*}
where $C$ is a constant independent of $n$. Thus, up to a subsequence, there exists $v \in V$ such that 
\begin{equation*}\label{4.16}
\begin{aligned}
\overline{v}_n \rightharpoonup v \text{ in } H^1({\Om_\ell}), &~~\overline{v}_n \to v \text{ in } L^2({\Om_\ell}),\cr
&~~\overline{v}_n(.\,,0)  \to v(.\,,0) \text{ in } L^2({\Gamma_0}),~~\overline{v}_n(.\,,L)  \to v(.\,,L) \text{ in } L^2({\Gamma'_0}).
\end{aligned}
\end{equation*}
Passing to the limit in \eqref{4.15} one derives as in Theorem \ref{thm-3.1} that $T(u_n,w_n) \to T(u,w)$ in $ L^2({\Gamma_0})\times L^2({\Gamma'_0})$. 
\vskip .3 cm
We can show also that $T$ is monotone. Indeed, suppose that $(u_1,w_1) \geq (u_2, w_2)$ in the sense that $u_1 \geq u_2 $ and $w_1 \geq w_2$ and set $(U_i,W_i)=T(u_i,w_i)$, $i=1,2$. First, for $U_i$ one has 
  \begin{equation*}
  \begin{aligned}
\int_{\Gamma_0} D'U_1'\psi'  +\mu U_1 \psi + \eta U_1 \psi~dx_1 = \int_{\Gamma_0} \nu \overline{v}_1(x_1,0)\psi +  g(u_1) \psi + \eta u_1 \psi~dx_1  ~~\forall \psi  \in H^1_0(\Gamma_0), \cr
\int_{\Gamma_0} D'U_2'\psi'  +\mu U_2 \psi + \eta U_2 \psi~dx_1 = \int_{\Gamma_0} \nu \overline{v}_2(x_1,0)\psi +  g(u_2) \psi + \eta u_2 \psi~dx_1  ~~\forall \psi  \in H^1_0(\Gamma_0). \cr
 \end{aligned}
\end{equation*}
By subtraction it comes 
  \begin{equation*}
  \begin{aligned}
\int_{\Gamma_0}& D'(U_1-U_2)'\psi'  +\mu (U_1-U_2) \psi + \eta (U_1-U_2) \psi~dx_1 \cr &= \int_{\Gamma_0} \nu (\overline{v}_1(x_1,0)-\overline{v}_2(x_1,0))\psi +  (g(u_1)-g(u_2)) \psi + \eta (u_1-u_2) \psi~dx_1  ~~\forall \psi  \in H^1_0(\Gamma_0). \cr
\end{aligned}
\end{equation*}
Choosing $\psi= -(U_1-U_2)^-$ and taking into account that, by Lemma \ref{lemma-4.3}, $\overline{v}_1(x_1,0)-\overline{v}_2(x_1,0)\geq 0$ and that for $\eta \geq L_g$, $(g(u_1)-g(u_2)) + \eta (u_1-u_2) \geq 0$ (Cf. \eqref{1.3},
\eqref{1.4}), one gets
 \begin{equation*}
\int_{\Gamma_0} D'|\{(U_1-U_2)^-\}'|^2  +\mu \{(U_1-U_2)^-\}^2  + \eta \{(U_1-U_2)^-\}^2~dx_1 \leq 0.
\end{equation*}
Thus $(U_1-U_2)^-=0$ and $U_1 \geq U_2$. Similarly one shows that   $W_1 \geq W_2$.
\vskip .3 cm
Next we assert that $T$ maps $K\times K'$ into itself. Indeed, if $(U_0,W_0)=T(0,0)$ one has, with an obvious notation for $\overline{v}_0$
\begin{equation*}
\int_{\Gamma_0} D'U_0'\psi'  +\mu U_0 \psi + \eta U_0 \psi~dx_1 = \int_{\Gamma_0} \nu \overline{v}_0(x_1,0)\psi ~dx_1  ~~\forall \psi  \in H^1_0(\Gamma_0).
\end{equation*}
\begin{equation*}
\int_{\Gamma'_0} D''W_0'\psi'  +\mu W_0 \phi + \xi W_0 \phi~dx_1 = \int_{\Gamma'_0} \nu' \overline{v}_0(x_1,L)\phi ~dx_1  ~~\forall \phi  \in H^1_0(\Gamma'_0).
\end{equation*}
Taking $\psi = -U_0^-$, $\phi=-W_0^-$, one deduces easily since $\overline{v}_0(x_1,0) \geq 0$ and $\overline{v}_0(x_1,L) \geq 0$ that $U_0 \geq 0$ and $W_0\geq 0$.
Similarly, if $(U_1,W_1) = T(m,m')$ one has, with an obvious notation for $\overline{v}_1$
\begin{equation*}
\int_{\Gamma_0} D'U_1'\psi'  +\mu U_1 \psi + \eta U_1 \psi~dx_1 = \int_{\Gamma_0} \nu  \overline{v}_1(x_1,0)\psi +g(m)\psi+ \eta m \psi~dx_1  ~~\forall \psi  \in H^1_0(\Gamma_0),
\end{equation*}
\begin{equation*}
\int_{\Gamma'_0} D''W_1'\phi'  +\mu' W_1 \phi + \xi W_1 \phi~dx_1 = \int_{\Gamma'_0} \nu'  \overline{v}_1(x_1,L)\phi +h(m')\phi+ \xi m' \phi~dx_1  ~~\forall \phi  \in H^1_0(\Gamma'_0).
\end{equation*}
Thus choosing $\psi = (U_1-m)^+$ and $\phi=(W_1-m')^+$, due to \eqref{4.7} and $\overline{v}_1 \leq \frac{\mu}{\nu}m = \frac{\mu'}{\nu'}m'$ it comes 
\begin{equation*}
 \begin{aligned}
\int_{\Gamma_0} D'|\{(U_1-m)^+\}'|^2  &+(\mu  + \eta) \{(U_1-m)^+\}^2   \cr&  \leq \int_{\Gamma_0} (\nu  \overline{v}_1(x_1,0) -\mu m) (U_1-m)^+dx_1 \leq 0.
 \end{aligned}
\end{equation*}
\begin{equation*}
\begin{aligned}
\int_{\Gamma'_0} D''|\{(W_1-m')^+\}'|^2  &+(\mu'  + \xi) \{(W_1-m')^+\}^2   \cr
& \leq \int_{\Gamma'_0} (\nu'  \overline{v}_1(x_1,L) -\mu' m') (W_1-m')^+dx_1 \leq 0.
\end{aligned}
\end{equation*}
From which it follows that $U_1 \leq m$, $W_1 \leq m'$. By the monotonicity of $T$ it results that $T$ maps the convex $K\times K'$ into itself. But clearly $T(K\times K') \subset C^\frac{1}{2}(\Gamma_0)\times C^\frac{1}{2}(\Gamma'_0)$  is relatively compact in 
$L^2(\Gamma_0)\times L^2(\Gamma'_0)$. Thus, by the Schauder fixed point theorem (see \cite{E},  \cite{GT},  \cite{C}), $T$ has a fixed point in $K\times K'$ which leads to a nontrivial solution $(u,v,w)$ to \eqref{4.1}. This completes the proof of the theorem.
\end{proof}
\begin{remark}
One can show as in Proposition \ref{prop-3.1} that $u$, $w$ are also non degenerate in the sense that 
\begin{equation*}
u \not \equiv 0,~~ w \not \equiv 0.
\end{equation*}
\end{remark}

\section{The case of an unbounded domain} 

The goal of this section is to show that when $\ell=+\infty$ it remains possible to define and find a nontrivial solution to problem \eqref{1.1}. Let us introduce some notation. For convenience we will 
denote by $V_\ell$ the space $V$ defined in section 1. Similarly we will indicate the dependence in $\ell$ for $\Gamma_0$ i.e 
\begin{equation*}
\Gamma_0 =\Gamma^\ell_0 = (-\ell, \ell)\times \{0\}.
\end{equation*}
When convenient we will set $I_\ell= (-\ell,\ell)$. In addition we set 
\begin{equation*}
\Om_\infty = \R \times (0,L), ~~\Gamma^\infty_0 = \R\times \{0\}, ~~\Gamma^\infty_1 = \R\times \{L\},
\end{equation*}
\begin{equation*}
V_\infty =\{ v \in H^1_{\ell oc}(\overline{\Om_\infty})~|~ v=0 \text{ on } \Gamma^\infty_1\},
\end{equation*}
where
\begin{equation*}
H^1_{\ell oc}(\overline{\Om_\infty})=\{v~|~ v\in H^1(\Om_{\ell_0}) ~\forall \ell_0 >0\}.
\end{equation*}
Then we have 
\begin{theorem} Under the assumption above, in particular  \eqref{1.2}, \eqref{1.3}, \eqref{1.5}, \eqref{1.6}, \eqref{2.11}, \eqref{2.13}  there exists $(u,v)$ nontrivial solution to 
\begin{equation}\label{5.1}
\begin{cases}
(u,v) \in H^1_0(\Gamma^\infty_0)\times V_\infty, \cr

\int_{\Om_{\ell_0}} D\nabla v\cdot \nabla \varphi ~dx +  \int_{I_{\ell_0}} \nu v(x_1,0)\varphi  ~ dx_1 \cr 

~~~~~~~~~~~~~~~~~~~~~~~~~~~~~~~~~~~~~~~~=  \int_{\Om_{\ell_0}} f(v)\varphi ~dx + \int_{I_{\ell_0}}  \mu u \varphi ~dx_1 ~~\forall \varphi \in V_{\ell_0}, ~\forall {\ell_0}, \cr

\int_{I_{\ell_0}}  D'u'\psi'  +\mu u \psi~dx_1 = \int_{I_{\ell_0}}  \nu v(x_1,0)\psi ~dx_1 +  \int_{I_{\ell_0}}  g(u) \psi ~dx_1 ~~\forall \psi  \in H^1_0({I_{\ell_0}} ),~\forall {\ell_0}. \cr
\end{cases} 
\end{equation}
(We  identify  $\Gamma^\infty_0$ with $\R$. Recall that $I_\ell= (-\ell,\ell)$).
\end{theorem}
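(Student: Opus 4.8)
The plan is to obtain $(u,v)$ as the limit, along an exhausting sequence $\ell\to\infty$, of the bounded--domain solutions provided by Theorem~\ref{thm-3.1}, the point being to produce estimates that are \emph{uniform in $\ell$}. For every $\ell>0$ large enough that \eqref{2.11} holds on $\Om_\ell$, let $(u_\ell,v_\ell)$ be the nontrivial solution of \eqref{1.1} on $\Om_\ell$ given by Theorem~\ref{thm-3.1}; thus $0\le u_\ell\le m$, $0<v_\ell<\frac{\mu}{\nu}m$, and $v_\ell$ is the unique positive (hence maximal) solution of \eqref{2.2} with $w=u_\ell$, so Lemma~\ref{lemma-2.2} gives $\epsilon\varphi_1^\ell\le v_\ell$, where $\varphi_1^\ell(x_1,x_2)=\cos\!\big(\tfrac{\pi x_1}{2\ell}\big)\sin\!\big(\tfrac{\pi x_2}{L}\big)$ is the normalised first Dirichlet eigenfunction of $\Om_\ell$. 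The decisive remark is that $\epsilon$ can be chosen \emph{independent of $\ell$}: since $\lambda_1(\Om_\ell)=\big(\tfrac{\pi}{2\ell}\big)^2+\big(\tfrac{\pi}{L}\big)^2$ decreases in $\ell$, once \eqref{2.11} holds for some $\ell_*$ it holds for all $\ell\ge\ell_*$ with the same $s$, hence the same $\epsilon$. Extend $u_\ell,v_\ell$ by $0$ outside $I_\ell$, $\Om_\ell$ (admissible for $v_\ell$, which vanishes on $\Gamma_1^\ell$).

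Next I would establish uniform local bounds. Fix $\ell_0>0$; for $\ell>\ell_0+1$, a Caccioppoli argument — testing the $v_\ell$-- and $u_\ell$--equations against $\zeta^2v_\ell$, $\zeta^2u_\ell$ with a cut--off $\zeta=1$ on $I_{\ell_0}$ supported in $I_{\ell_0+1}$, the boundary terms $\int\nu\zeta^2v_\ell^2$, $\int\mu\zeta^2u_\ell^2$ having the favourable sign, and using the $L^\infty$--bounds above together with \eqref{1.3} — gives $\|v_\ell\|_{H^1(\Om_{\ell_0})}+\|u_\ell\|_{H^1(I_{\ell_0})}\le C(\ell_0)$ with $C(\ell_0)$ independent of $\ell$. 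Letting $\ell=n\to\infty$ and diagonalising over $\ell_0=1,2,\dots$ yields a subsequence, still denoted $(u_n,v_n)$, and a limit $(u,v)$ such that for each $\ell_0$: $v_n\rightharpoonup v$ in $H^1(\Om_{\ell_0})$ and $v_n\to v$ in $L^2(\Om_{\ell_0})$; $v_n(\cdot,0)\to v(\cdot,0)$ in $L^2(I_{\ell_0})$ (compactness of the trace $H^1(\Om_{\ell_0})\to L^2(\Gamma_0^{\ell_0})$); $u_n\rightharpoonup u$ in $H^1(I_{\ell_0})$ and $u_n\to u$ in $L^2(I_{\ell_0})$; and, after one more extraction, $v_n\to v$ a.e. in $\Om_\infty$. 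In particular $v\in V_\infty$, $u\in H^1_{\ell oc}(\R)$ and $0\le u\le m$.

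Passing to the limit is then routine: given $\ell_0$ and test functions $\varphi\in V_{\ell_0}$, $\psi\in H^1_0(I_{\ell_0})$, extend them by $0$ to obtain elements of $V_n$, $H^1_0(I_n)$ for $n>\ell_0$; since they are supported in $\Om_{\ell_0}$, $I_{\ell_0}$, the weak equations \eqref{1.1} for $(u_n,v_n)$ reduce to the corresponding integral identities over $\Om_{\ell_0}$, $I_{\ell_0}$, and the convergences above together with the continuity of $f,g$ let one pass to the limit term by term, yielding exactly the identities of \eqref{5.1}; as $\ell_0$ and the test functions are arbitrary, $(u,v)$ solves \eqref{5.1}. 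For non-triviality, $\varphi_1^\ell\to\sin\!\big(\tfrac{\pi x_2}{L}\big)$ locally uniformly as $\ell\to\infty$, so the a.e. limit of $v_n\ge\epsilon\varphi_1^n$ gives $v\ge\epsilon\sin\!\big(\tfrac{\pi x_2}{L}\big)>0$ in $\Om_\infty$; hence $v\not\equiv0$. Finally $u\not\equiv0$: if $u\equiv0$, the third line of \eqref{5.1} forces $v(\cdot,0)=0$, hence $D\partial_n v=\mu u-\nu v=0$ on $\Gamma_0^\infty$ as well, so, extending $v$ by $0$ below $\Gamma_0^\infty$, one gets as in Proposition~\ref{prop-3.1} that $-D\Delta\tilde v=f(\tilde v)$ in a ball $B$ centred on $\Gamma_0^\infty$, with $f(\tilde v)\ge0$ and $f(\tilde v)\not\equiv0$ (by continuity $0<v<1$ near $\Gamma_0^\infty$), whence $\tilde v>0$ in $B$ by the strong maximum principle — contradicting $\tilde v\equiv0$ on half of $B$.

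The main obstacle is precisely the $\ell$--uniformity just exploited — both that the Caccioppoli constants $C(\ell_0)$ do not blow up as $\ell\to\infty$, and that the lower bound $\epsilon\varphi_1^\ell\le v_\ell$ can be taken with $\epsilon$ independent of $\ell$ (which hinges on the monotonicity of $\lambda_1(\Om_\ell)$ in $\ell$). Granting these, the diagonal compactness argument and the passage to the limit are entirely standard.
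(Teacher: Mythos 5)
Your proposal is correct and follows essentially the same route as the paper: uniform-in-$\ell$ lower bound $\epsilon\varphi_1\le v_\ell$ obtained from the monotonicity of $\lambda_1(\Om_\ell)$ and the explicit bounded eigenfunction, a Caccioppoli estimate with an $x_1$-cut-off giving local $H^1$ bounds independent of $\ell$, diagonal extraction over $\ell_0$, and passage to the limit in the weak formulations, with non-triviality coming from the persistent lower bound on $v$. The additional argument that $u\not\equiv 0$ (via the reflection/regularity argument of Proposition~\ref{prop-3.1}) is a correct bonus not carried out in the paper's own proof of this theorem.
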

\begin{proof} Let $(u_\ell, v_\ell)$ be a solution to \eqref{1.1}. We can find such a solution for every $\ell >0$ (Cf. Theorem \ref{thm-3.1}). One notices that for $\ell' \geq \ell$ one has 
\begin{equation*}
\Om_\ell \subset \Om_{\ell'}, ~~H^1_{0}({\Om_\ell}) \subset H^1_{0}({\Om_{\ell'}}),
\end{equation*}
(we suppose the functions of $H^1_{0}({\Om_\ell})$ extended by $0$ outside ${\Om_\ell}$). By definition of $\lambda_1 = \lambda_1(\Om_\ell)$ one has 
\begin{equation*}
\lambda_1(\Om_\ell) =  \inf_{H^1_{0}({\Om_\ell})\backslash \{0\}} \frac{\int_{\Om_\ell} |\nabla v |^2 dx}{\int_{\Om_\ell} v^2dx} ,
\end{equation*}
and thus clearly 
\begin{equation*}
\lambda_1(\Om_{\ell'}) \leq \lambda_1(\Om_\ell) ~~\forall~ \ell' \geq \ell.
\end{equation*}
Let us assume for some $\ell_1 >0$ (Cf. \eqref{2.11}) 
\begin{equation}\label{5.2}
 \lambda_1(\Om_{\ell_1}) \leq \frac{f(s)}{Ds} ~~\text{ for } s >0 \text{ small enough. }
\end{equation}
Then for any $\ell \geq \ell_1$ one has for for $s >0$ small enough
\begin{equation*}
 \lambda_1(\Om_{\ell}) \leq \frac{f(s)}{Ds} .
\end{equation*}
Moreover, since it is easy to see that $\varphi_1$ defined in \eqref{2.10} is given by 
\begin{equation*}
\varphi_1 =\sin\frac{\pi}{2\ell}(x_1+\ell)\sin\frac{\pi}{L}x_2,
\end{equation*}
one has $ 0\leq \varphi_1  \leq 1$ and if \eqref{5.2} holds one has
\begin{equation*}
D \lambda_1(\Om_{\ell})\epsilon \varphi_1 \leq f(\epsilon \varphi_1) 
\end{equation*}
for $\epsilon>0$ small enough independently of $\ell \geq \ell_1$. We suppose from now on that this $\epsilon$ is fixed such that if $(u_\ell,v_\ell)$ is a solution to \eqref{1.1} constructed as in Theorem \ref{thm-3.1} one has 
\begin{equation*}
\epsilon \varphi_1 \leq v_\ell 
\end{equation*}
and in particular for every $\ell \geq \ell_1$
\begin{equation}\label{5.3}
\epsilon(\sin\frac{\pi}{4})^2 \leq \epsilon \varphi_1 \leq v_\ell~~ \text{ a.e. } x \in (-\frac{\ell}{2},\frac{\ell}{2})\times (\frac{L}{4},\frac{3L}{4}).
\end{equation}
One should also notice that independently of $\ell$ one has 
\begin{equation}\label{5.4}
0\leq u_\ell \leq m,~~  \epsilon \varphi_1 \leq v_\ell\leq \frac{\mu}{\nu}m.
\end{equation}
We assume from now on $\ell \geq \ell_1$ and for $  \ell_0 \leq \ell - 1$ we define $\rho$ by 
\begin{equation*}
\rho = \rho(x_1)=
\begin{cases}
1 \text{ on } I_{\ell_0},\cr
x_1+\ell_0 +1 \text{ on } (-\ell_0-1, -\ell_0),\cr
-x_1 + \ell_0 +1 \text{ on } (\ell_0,\ell_0 +1),\cr
0 \text{ outside }I_{\ell_0+1},\cr
\end{cases}
\end{equation*}
whose  graph is depicted below.
\begin{figure}[H]
	\centering
	\includegraphics[scale=1.5]{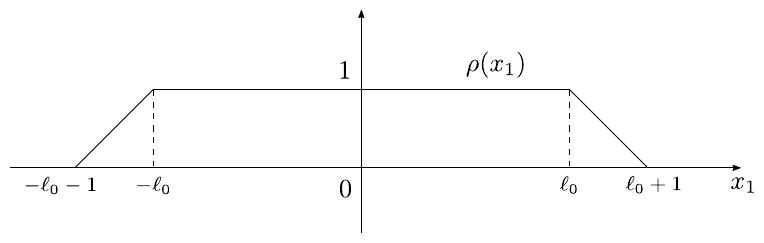}
	\caption{The graph of the function  $\rho(x_1)$}
	%	\label{one-road}	
\end{figure}
Clearly $\rho^2 v_\ell = \rho^2(x_1) v_\ell \in V_\ell$ and from the first equation of \eqref{1.1} one gets
\begin{equation}\label{5.5}
\int_{\Om_\ell} D\nabla v_\ell \cdot \nabla (\rho^2 v_\ell) ~dx +  \int_{\Gamma_0} \nu \rho^2 v_\ell^2(x_1,0) ~ dx_1 =  \int_{\Om_\ell} f(v_\ell)\rho^2 v_\ell~dx + \int_{\Gamma_0} \mu u_\ell \rho^2 v_\ell (x_1,0)~dx_1.
\end{equation}
One should notice that in the integrals in $\Om_\ell$ one integrates only on $\Om_{\ell_0+1}$ and for the ones in $\Gamma_0$ on $I_{\ell_0+1}$. Then remark that 
\begin{equation*}
\int_{\Om_\ell} \nabla v_\ell \cdot \nabla (\rho^2 v_\ell) ~dx = \int_{\Om_\ell} |\nabla v_\ell|^2\rho^2 + 2\rho v_\ell \nabla v_\ell \cdot \nabla \rho~dx, 
\end{equation*}
and 
\begin{equation*}
\int_{\Om_\ell}| \nabla (\rho v_\ell)|^2 ~dx = \int_{\Om_\ell}|\rho \nabla  v_\ell +  v_\ell  \nabla   \rho |^2 ~dx = \int_{\Om_\ell} |\nabla v_\ell|^2\rho^2 + 2\rho v_\ell \nabla v_\ell \cdot \nabla \rho + v_\ell^2 |\nabla \rho|^2~dx. 
\end{equation*}
From this it follows that 
\begin{equation*}
\int_{\Om_\ell} \nabla v_\ell \cdot \nabla (\rho^2 v_\ell) ~dx = \int_{\Om_\ell}| \nabla (\rho v_\ell)|^2 ~dx - \int_{\Om_\ell} v_\ell^2 |\nabla \rho|^2~dx. 
\end{equation*}
Thus, since the second integral of \eqref{5.5} is nonnegative,  it comes 
\begin{equation*}
D\int_{\Om_{\ell_0+1}}| \nabla (\rho v_\ell)|^2 ~dx \leq D \int_{\Om_{\ell_0+1}} v_\ell^2 |\nabla \rho|^2 ~dx + \int_{\Om_{\ell_0+1}} f(v_\ell)\rho^2 v_\ell~dx 
+ \int_{I_{\ell_0+1}} \mu u_\ell \rho^2 v_\ell (x_1,0)~dx_1. 
\end{equation*}
Using the definition of $\rho$ and in particular the fact that $\rho =1 $ on $\Om_{\ell_0}$ we get easily by \eqref{5.4}
\begin{equation}\label{5.6}
\int_{\Om_{\ell_0}}| \nabla  v_\ell|^2 ~dx \leq C
\end{equation}
where $C$ is independent of $\ell$. 
Taking now $\psi = \rho^2u_\ell$ in the second equation of \eqref{1.1} we get
\begin{equation*}
\int_{I_{\ell_0+1}} D'u_\ell'( \rho^2u_\ell)'  +\mu \rho^2u_\ell^2~dx_1 = \int_{I_{\ell_0+1}} \nu v_\ell(x_1,0)\rho^2u_\ell +  g(u_\ell) \rho^2u_\ell ~dx_1.
\end{equation*}
Arguing as above we derive easily 
\begin{equation*}
\int_{I_{\ell_0+1}} u_\ell'  (\rho^2 u_\ell)' ~dx_1 = \int_{I_{\ell_0+1}}| (\rho u_\ell)'|^2 ~dx_1 - \int_{I_{\ell_0+1}} u_\ell^2 \rho'^2~dx_1. 
\end{equation*}
This leads to 
\begin{equation*}
 \int_{I_{\ell_0+1}}D' | (\rho u_\ell)'|^2 + \mu \rho^2u_\ell^2~dx_1 \leq\int_{I_{\ell_0+1}} D'u_\ell^2 \rho'^2~dx_1 + \nu v_\ell(x_1,0)\rho^2u_\ell +  g(u_\ell) \rho^2u_\ell ~dx_1. 
\end{equation*}
Integrating only on $I_{\ell_0}$ in the fisrt integral i.e. where $\rho=1$ we obtain
\begin{equation}\label{5.7}
 \int_{I_{\ell_0}}( u_\ell')^2 +  u_\ell^2~dx_1 \leq C 
\end{equation}
where $C$ is some other constant independent of $\ell$.
It results from \eqref{5.6}, \eqref{5.7} that $(u_\ell,v_\ell)$ is bounded in $H^1(I_{\ell_0})\times V_{\ell_0}$ independently of $\ell$. Thus there exists a subsequence of $(u_\ell,v_\ell)$ that we will 
denote by $(u_{n,0},v_{n,0})$ such when $n\to \infty$
\begin{equation*}
u_{n,0} \rightharpoonup u^0 \text{ in } H^1(I_{\ell_0}), ~~v_{n,0} \rightharpoonup v^0 \text{ in } V_{\ell_0}, ~~u_{n,0} \to u^0 \text{ in } L^2(I_{\ell_0}), ~~v_{n,0} \to v^0 \text{ in } L^2(\Om_{\ell_0})
\end{equation*}
\begin{equation*}
v_{n,0}(.,0) \to v^0(.,0) \text{ in } L^2(I_{\ell_0}).
\end{equation*}
Considering the equations 
\begin{equation*}
\int_{\Om_{\ell_0}} D\nabla v_\ell\cdot \nabla \varphi ~dx +  \int_{I_{\ell_0}} \nu v_\ell(x_1,0)\varphi  ~ dx_1 =  \int_{\Om_{\ell_0}} f(v_\ell)\varphi ~dx + \int_{I_{\ell_0}} \mu u_\ell \varphi ~dx_1 ~~\forall \varphi \in V_{\ell_0},
\end{equation*}
\begin{equation*}
\int_{I_{\ell_0}} D'u_\ell'\psi'  +\mu u_\ell \psi~dx_1 = \int_{I_{\ell_0}} \nu v_\ell(x_1,0)\psi ~dx_1 +  \int_{I_{\ell_0}} g(u_\ell) \psi ~dx_1 ~~\forall \psi  \in H^1_0(I_{\ell_0}),
\end{equation*}
with $(u_\ell, v_\ell)$ replaced by $(u_{n,0},v_{n,0})$, one can pass to the limit in $n$ and see that $(u^0,v^0) \in H^1(I_{\ell_0}) \times V_{\ell_0}$ satisfies 
\begin{equation*}
\int_{\Om_{\ell_0}} D\nabla v^0\cdot \nabla \varphi ~dx +  \int_{I_{\ell_0}} \nu v^0(x_1,0)\varphi  ~ dx_1 =  \int_{\Om_{\ell_0}} f(v^0)\varphi ~dx + \int_{I_{\ell_0}} \mu u^0 \varphi ~dx_1 ~~\forall \varphi \in V_{\ell_0},
\end{equation*}
\begin{equation*}
\int_{I_{\ell_0}} D'{u^0}'\psi'  +\mu u^0 \psi~dx_1 = \int_{I_{\ell_0}} \nu v^0(x_1,0)\psi ~dx_1 +  \int_{I_{\ell_0}} g(u^0) \psi ~dx_1 ~~\forall \psi  \in H^1_0(I_{\ell_0}).
\end{equation*}
(Note that a function of $V_{\ell_0}$ extended by $0$ belongs to $V_\ell$).
Clearly -as a subsequence of $(u_\ell,v_\ell)$- the sequence $(u_{n,0},v_{n,0})$ is bounded in $H^1(I_{\ell_0+1})\times V_{\ell_0+1}$ independently of $n$ and one can extract a subsequence that we still label by $n$ 
and denote by $(u_{n,1},v_{n,1})$ such that 
\begin{equation*}
u_{n,1} \rightharpoonup u^1 \text{ in } H^1(I_{\ell_0+1}), ~~v_{n,1} \rightharpoonup v^1 \text{ in } V_{\ell_0+1}, ~~u_{n,1} \to u^1 \text{ in } L^2(I_{\ell_0+1}), ~~v_{n,1} \to v^1 \text{ in } L^2(\Om_{\ell_0+1})
\end{equation*}
\begin{equation*}
v_{n,1}(.,0) \to v^1(.,0) \text{ in } L^2(I_{\ell_0+1}).
\end{equation*}
Note that $(u^1,v^1)=(u^0,v^0)$ on $I_{\ell_0}\times \Om_{\ell_0}$. Clearly $(u^1,v^1)$ satisfies 
\begin{equation*}
\int_{\Om_{\ell_0+1}} D\nabla v^1\cdot \nabla \varphi ~dx +  \int_{I_{\ell_0+1}} \nu v^1(x_1,0)\varphi  ~ dx_1 =  \int_{\Om_{\ell_0+1}} f(v^1)\varphi ~dx + \int_{I_{\ell_0+1}} \mu u^1 \varphi ~dx_1 ~~\forall \varphi \in V_{\ell_0+1},
\end{equation*}
\begin{equation*}
\int_{I_{\ell_0+1}} D'{u^1}'\psi'  +\mu u^1 \psi~dx_1 = \int_{I_{\ell_0+1}} \nu v^1(x_1,0)\psi ~dx_1 +  \int_{I_{\ell_0+1}} g(u^1) \psi ~dx_1 ~~\forall \psi  \in H^1_0(I_{\ell_0+1}).
\end{equation*}
By induction one constructs a sequence $(u_{n,k},v_{n,k})$ extracted from the preceding converging toward $(u^k,v^k)$ satisfying the equations above where we have replaced $\ell_0 + 1$ by  $\ell_0 + k$. 
Then using the usual diagonal process it is clear that the sequence $(u_{n,n},v_{n,n})$ will converge toward a solution to \eqref{5.1} nontrivial thanks to \eqref{5.3}. This completes the proof of the theorem. 
\end{proof}

{\bf Acknowledgement} :  Part of this work was performed when the first author was visiting USTC in Hefei. We thank the university and the school of mathematical sciences for their hospitality and their support.

\end{document}